\newtheorem{theorem}{Theorem}[section]
\newtheorem{lemma}[theorem]{Lemma}
\newtheorem{prop}[theorem]{Proposition}
\newcommand{\la}{\langle}
\newcommand{\ra}{\rangle}
\newcommand{\li}{\left}
\newcommand{\ri}{\right}
\newcommand{\R}{\mathbb{R}}
\newcommand{\s}{\mathcal{S}}
\newcommand{\C}{\mathbb{C}}
\newcommand{\N}{\mathbb{N}}
\newcommand{\re}{\operatorname{Re}}
\newcommand{\im}{\operatorname{Im}}
\let\epsilon\varepsilon
\definecolor{darkred}{rgb}{0.93,0.0,0.0}
\definecolor{mygreen}{rgb}{0.0,0.65,0.0}
\title[Harmonic Approximation and Resolvent Estimates]{Harmonic Approximation and Resolvent Estimates for Semiclassical Non-Self-Adjoint Operators}
\author{Stepan Malkov}
\address{Department of Mathematics, University of California, Los Angeles, CA 90095, USA.}
\email{malkov@math.ucla.edu}
\date{\today}
\numberwithin{equation}{section}
\begin{document}
\begin{abstract}
    We study resolvent estimates and bounds on the low lying spectrum for a broad class of non-self-adjoint non-elliptic $h$--pseudodifferential operators with critical points. Imposing dynamical conditions on the average of the real part of the principal symbol along the Hamilton flow of the imaginary part, we establish precise semiclassical resolvent estimates in an $O(h)$--neighborhood of the boundary of the semiclassical pseudospectrum, away from the eigenvalues of quantizations of the quadratic approximations of the principal symbols of the operators.
\end{abstract}
\maketitle

\tableofcontents
\section{Introduction and statement of results}
The study of non-self-adjoint operators has a long tradition in mathematical physics, including areas such as scattering theory \cite{zworski2017scattering}, \cite{sjostrand_resonance_lectures}, kinetic equations \cite{HerauNier2004}, \cite{herau_sjostrand_stolk_kfp}, and $\mathcal{PT}$-symmetric quantum mechanics \cite{BenderHook2024}, \cite{CalicetiGraffiHitrikSjostrand2012}. A major difficulty in the non-self-adjoint spectral theory is that the norm of the resolvent may be very large even far from the spectrum. Thus, while we have
\begin{equation}
    \|(P-z)^{-1}\|_{\mathcal{L}(H,H)} = \frac{1}{\text{dist}(z,\text{Spec}(P))}, \quad z \in \C \setminus \text{Spec}(P),
\end{equation}
for a self-adjoint operator $P$ acting on a complex Hilbert space $H,$ in contrast, the resolvent norm can be much larger than $\text{dist}(z,\text{Spec}(P))^{-1}$ when the operator is non-self-adjoint \cite{TrefethenEmbree2005}, \cite{Sjostrand2019}. Following \cite{TrefethenEmbree2005}, we may therefore introduce the notion of the pseudospectrum, defined roughly as the region in the complex spectral plane where the resolvent norm is large. When making this notion precise, say, when working on $\R^n,$ one needs to choose a scaling for the operators, and the usual such scaling, which is used throughout this work, is the semiclassical one \cite{Zworski2012}, where we have $P=P(x,hD;h).$ Here, $x \in \R^n$ and $0<h \ll 1$ is the semiclassical parameter. Letting $p$ be the semiclassical principal symbol of $P,$ we may introduce the semiclassical pseudospectrum of $P,$
\begin{equation}
    \Sigma = \overline{p(\R^{2n})} \subseteq \C.
\end{equation}
Under some natural assumptions on $P,$ including an ellipticity condition at infinity, we know, as a consequence of the calculus of $h$--pseudodifferential operators \cite[Chapter 4]{Zworski2012}, that the resolvent $(P-z)^{-1}$ exists and is uniformly bounded on $L^2(\R^n)$ as $h \to 0^+,$ for $z \not \in \Sigma.$ On the other hand, the existence of the so-called H\"{o}rmander-Davies quasimodes \cite{Hormander1960,Davies1999,Zworski2001,DenckerSjostrandZworski2004} indicates that when the spectral parameter $z \in \C$ varies in the interior of $\Sigma,$ the norm of the resolvent of $P$ cannot be expected to be bounded by a fixed negative power of $h$ as $h \to 0^+.$ 

It then becomes natural to study semiclassical resolvent estimates and the spectrum when the spectral parameter $z$ is confined to a small $h$--dependent neighborhood of the boundary of the semiclassical pseudospectrum $\Sigma.$ From the point of view of time-dependent evolution problems, a particularly interesting case occurs when $\re p 
\geq 0$ and the corresponding operator $P$ admits a maximally accretive realization on $L^2(\R^n).$ Associated to $P$ is then the evolution semigroup $e^{-\frac{tP}{h}}, t \geq 0,$ and when examining the link between the large time behavior of the semigroup and the spectrum of $P$ near the imaginary axis, the most significant spectral region is the one given by $\{z \in \C: 0 \leq \re z \leq O(h)\}.$

The present work is motivated by the considerations above, and more specifically, by the series of papers \cite{hitrik_starov_i}, \cite{hitrik_starov_ii}, where resolvent estimates and spectral asymptotics in $O(h)$--neighborhoods of the origin were studied for a broad class of non-self-adjoint $h$--pseudodifferential operators $P$ on $\R^n,$ with principal symbols satisfying $\re p \geq 0$ and with $(\re p)^{-1}(0)$ finite. Assuming furthermore that $\im p$ vanishes to the second order along $(\re p)^{-1}(0)$ and that the corresponding quadratic approximations of $p$ enjoy some partial ellipticity properties, complete asymptotic expansions for the eigenvalues of $P$ in discs of the form $D(0,Ch)$ were established and precise resolvent estimates in such discs, away from the eigenvalues, were obtained in \cite{hitrik_starov_i}, \cite{hitrik_starov_ii}. In this work, we shall consider a related class of non-self-adjoint semiclassical operators with critical points, relaxing some of the assumptions of \cite{hitrik_starov_i}, \cite{hitrik_starov_ii} and replacing them by more natural ones of dynamical nature. Crucially, we will no longer require that the zero set $(\re p)^{-1}(0)$ should be finite. Instead, a finiteness assumption will be made concerning the set of critical points of $p$ with purely imaginary critical values, 
\begin{equation}
    \mathcal{C} = \{X \in \R^{2n}: \re p(X)=0, \,\, H_{\im p}(X) =0\}.
\end{equation}
Here, $H_{\im p}$ is the Hamilton vector field of $\im p.$ Following the general ideas of the method of averaging, introduced and developed in \cite{HerauHitrikSjostrand2008} in the context of second order differential operators of Kramers-Fokker-Planck type, we shall assume that the time average
\begin{equation}\label{eq:avg_defn}
    \la \re p \ra_{\im p,T}(X) := \frac{1}{2T} \int_{-T}^T \re p(e^{tH_{\im p}}X)dt, \quad X \in \R^{2n},
\end{equation} 
satisfies 
\begin{equation}
    \la \re p\ra_{\im p,T}(X) \sim |X|^2, \quad X \to 0
\end{equation}
for some $T>0,$ and away from the origin one roughly has $\la \re p\ra_{\im p,T} \geq \frac1C.$ Here, we assume for simplicity that $\mathcal{C}=\{0\}.$ Under the dynamical conditions above, we shall obtain accurate resolvent estimates in a region of the form $\{z \in \C: 0 \le \re z \leq Ch\}$ and establish bounds on the spectrum of $P$ in this region in terms of the eigenvalues of the quadratic approximations. Let us now describe the assumptions and state the results demonstrated in this paper.

Throughout, we work on $\R^n$ and let $X=(x,\xi) \in \R^{2n}$ denote the phase space variables associated to the phase space $T^*\R^n \cong \R^{2n}.$ 
 
Let $p_0 =p_0(x,\xi) \in C^\infty(\R^{2n};\C)$ be such that \begin{equation}\label{ass_pos}
    \re p_0 (X)\geq 0, \quad X \in \R^{2n}. 
\end{equation}
We shall impose a quadratic growth restriction on $p_0$ by requiring that 
\begin{equation}\label{ass_der}
    \partial^\alpha p_0  \in L^\infty(\R^{2n}), \quad |\alpha| \geq 2,
\end{equation}
and assume that for some $C>0,$ one has
\begin{equation}\label{ass_symbol_class}
|\im p_0(X)| \leq C(1+\re p_0(X)), \quad X \in \R^{2n}.
\end{equation}
As shown in Section \ref{order_function}, (\ref{ass_pos}) and (\ref{ass_der}) imply that the function 
\begin{equation}
    m(X):=1+\re p_0(X)
\end{equation}
is an order function, in the sense of \cite[Chapter~7]{Dimassi_Sjostrand_1999}. Associated to the order function $m$ is the symbol class
\begin{equation}
 S(m) :=\{a \in C^\infty(\R^{2n}):\forall \alpha \in \N^{2n}, \exists C_\alpha>0, |\partial^\alpha a|\leq C_\alpha m\},  
\end{equation} 
with (\ref{ass_symbol_class}) implying that $p_0 \in S(m)$ (see Lemma \ref{order_function_lemma}). 

We now let $p:=p(x,\xi;h) \in S(m)$ be a complex-valued $h$--dependent symbol that admits an asymptotic expansion of the form
\begin{equation}\label{ass_full_symbol}
p(x,\xi;h) \sim \sum_{k=0}^\infty h^k p_k(x,\xi)
\end{equation}
in $S(m)$ for all $h>0$ small enough, in the sense that $p_k \in S(m), k \geq 0,$ are symbols independent of $h$ and for each $N \in \N,$ one has
\begin{equation}
    p-\sum_{k=0}^N h^k p_k \in h^{N+1} S(m). 
\end{equation}
The leading term $p_0$ in this expansion is the semiclassical principal symbol of $p,$ whereas the sub-leading term $p_1$ is the semiclassical subprincipal symbol of $p.$

We shall assume that the real part of the principal symbol $p_0$ is elliptic at infinity, in the sense that \begin{equation}\label{ass_elliptic}
\exists C>0, \quad  |X|\geq C \implies  \re p_0(X) \geq \frac{1}{C},
\end{equation}
and note that (\ref{ass_elliptic}) is equivalent to the ellipticity at infinity condition 
\begin{equation}
    \exists C>0, \quad |X| \geq C \implies \re p_0(X) \geq \frac{m(X)}{C},
\end{equation}
in the symbol class $S(m).$

Our main hypothesis will be that the critical set
\begin{equation}
    \mathcal{C}:=\{X \in \R^{2n}: \re p_0(X)=0, H_{\im p_0}(X)=0\}
\end{equation}
consists of a single point, which we assume to be the origin,
\begin{equation}\label{ass_morse}
    \mathcal{C}=\{0\}.
\end{equation}
Here, $H_b:= \nabla_\xi b \cdot \partial_x-\nabla_x b \cdot \partial_\xi$ is the Hamilton vector field associated to a symbol $b \in C^\infty(\R^{2n};\R).$ As the assumptions (\ref{ass_pos}) and (\ref{ass_morse}) imply that $dp_0(0)=0,$ the Taylor expansion of $p_0$ at the origin takes the form  
\begin{equation}\label{ass_quad}
p_0(X)=p_0(0)+q(X)+O(|X|^3), \quad X \to 0.
\end{equation}
Here, $p_0(0) \in i \R$ and $q$ is the quadratic approximation to $p_0$ at $0.$ It follows from (\ref{ass_pos}) that $\re q$ is a positive semi-definite quadratic form on $\R^{2n}.$

Finally, following \cite{HerauHitrikSjostrand2008}, we impose two dynamical conditions on the averages of $\re p_0$ and $\re q$ along the flows of $H_{\im p_0}$ and $H_{\im q},$ respectively. Defining the averages $\la \re p_0\ra_{\im p_0,T}$ and $\la \re q\ra_{\im q,T}$ as in (\ref{eq:avg_defn}), we note that (\ref{ass_der}) ensures that the vector field $H_{\im p_0}$ is globally Lipschitz on $\R^{2n}$ and therefore complete, so that the associated Hamiltonian flow exists for all times. 

Our first assumption is that the quadratic form $\la \re q\ra_{\im q,T}$ satisfies the ellipticity condition
\begin{equation}\label{ass_dyn_1}
    \la \text{Re } q \ra_{\im q,T}(X)  >0, \quad 0 \neq X\in \R^{2n},
\end{equation}
for some (and hence for all) $T>0.$

\textit{Remark.} The assumption (\ref{ass_dyn_1}) is equivalent to the condition that the quadratic form $q$ in (\ref{ass_quad}) satisfies
\begin{equation}
    \{X \in \R^{2n}: H^k_{\im q} \re q(X)=0, \quad \forall k \in \N\}=\{0\}.
\end{equation}
In the terminology of \cite{HitrikPravdaStarov2009}, \cite{hitrik_starov_i}, this is equivalent to the statement that the singular space $S \subseteq \R^{2n}$ associated to the quadratic form $q$ satisfies $S=\{0\}.$

Moreover, we suppose that there exists $T>0$ such that for every $\epsilon>0,$ there exists $\delta>0$ such that 
\begin{equation}\label{ass_dyn_2}
  |X| > \epsilon \implies \la \re p_0\ra_{\im p_0,T}(X) > \delta, \quad X \in \R^{2n}.
\end{equation} 
Here, we may notice that in view of (\ref{ass_elliptic}), the implication (\ref{ass_dyn_2}) holds automatically in a neighborhood of infinity for each $T>0.$ In what follows, we let $T>0$ be fixed and such that both (\ref{ass_dyn_1}) and (\ref{ass_dyn_2}) hold. 

Let $P=p^w(x,hD;h)$ be the semiclassical Weyl quantization of the symbol $p$ in (\ref{ass_full_symbol}), 
\begin{equation}
    p^w(x,hD;h)u(x)=\frac{1}{(2\pi h)^n} \iint_{\R^{2n}} e^{\frac{i}{h} (x-y)\cdot \theta}p\li(\frac{x+y}{2},\theta;h\ri)u(y)dyd\theta.
\end{equation}
The operator $P$ will be viewed as a closed densely defined operator on $L^2(\R^n),$ equipped with the maximal domain 
\begin{equation}\label{max_domain}
    \mathcal{D}(P)=\{u \in L^2(\R^n):Pu \in L^2(\R^n)\}=H(m),
\end{equation}
for all $h>0$ small enough. Here, $H(m)$ is the microlocally weighted Sobolev space introduced in (\ref{weighted_sobolev_defn}) and the equality $\mathcal{D}(P)=H(m)$ follows from (\ref{ass_pos}), (\ref{ass_elliptic}). It also follows from (\ref{ass_pos}), (\ref{ass_elliptic}) that for each $z \in \C$ satisfying
\begin{equation}\label{eq:spec_region}
    \re z < \frac{1}{O(1)},
\end{equation}
the operator $P-z: H(m) \to L^2(\R^n)$ is Fredholm of index zero for all $h>0$ small enough. Furthermore, using (\ref{ass_pos}), (\ref{ass_elliptic}), and the sharp G\aa rding inequality \cite[Chapter 7]{Dimassi_Sjostrand_1999}, \cite[Remark 3.5.13]{Martinez2002}, we obtain that there exists $C_0>0$ such that for all $u \in \mathcal{D}(P)$ and all $h>0$ small enough, we have 
\begin{equation}\label{eq:garding}
    \re \la Pu,u\ra_{L^2(\R^n)} \geq -C_0 h \|u\|_{L^2(\R^n)}^2.
\end{equation}
It follows from (\ref{eq:garding}) and the Fredholm property that the spectrum of $P$ is confined to the region $\{z \in \C: \re z \geq -C_0 h\},$ and we have
\begin{equation}
    \|(P-z)^{-1}\|_{\mathcal{L}(L^2(\R^n),L^2(\R^n))} \leq \frac{1}{-\re z -C_0h}, \quad \re z <-C_0h,
\end{equation}
for all $h>0$ small enough. We conclude by the analytic Fredholm theory that the spectrum of $P$ in the region (\ref{eq:spec_region}) is discrete for all $h>0$ small enough, consisting of eigenvalues of finite algebraic multiplicity.

The following is the first main result of this work. It is an analogue of \cite[Proposition 7.2]{HerauHitrikSjostrand2008}, established in that paper in the case of second order differential operators of Kramers-Fokker-Planck type.
\begin{theorem}\label{thm0}
   Assume that the assumptions 
    \textup{(\ref{ass_pos}), (\ref{ass_der}), (\ref{ass_symbol_class}), (\ref{ass_full_symbol}), 
    (\ref{ass_elliptic}), (\ref{ass_morse}), (\ref{ass_dyn_1}), (\ref{ass_dyn_2})} hold. For each $B>0,$ there exists $D>0$ and $h_0>0$ such that for all $0<h \leq h_0,$ the spectrum of $P$ does not meet the region
   \begin{equation}\label{far_set}
       \{z \in \C: \re z < Bh, |\im z-\im p_0(0)|>Dh\}.
   \end{equation}
   Furthermore, for all $z$ in the set \textup{(\ref{far_set})} and all $0<h \leq h_0,$ we have \begin{equation}\label{eq:thm1_estimate}
        \|(P-z)^{-1}\|_{\mathcal{L}(L^2(\R^n),L^2(\R^n))}\leq O\li(\frac{1}{h}\ri).
    \end{equation}
\end{theorem}

It follows from Theorem \ref{thm0} that the spectrum of $P$ in the strip $-C_0 h < \re z < Bh$ is confined to an $O(h)$--neighborhood of the critical value $p_0(0) \in i \R$ of the principal symbol of $p,$ where we expect it to be related to the spectrum of the quadratic operator $q^w(x,hD),$ with $q$ given in (\ref{ass_quad}). Before stating our next result, let us recall that it was shown in \cite{HitrikPravdaStarov2009} that under the assumption (\ref{ass_dyn_1}), the spectrum of the quadratic operator $q^w(x,D)$ on $L^2(\R^n)$ is discrete, consisting of isolated eigenvalues of finite algebraic multiplicity. The eigenvalues form a lattice in the open right half-plane, confined to an angular region of the form $|\arg z| \leq \theta <\frac\pi2$ --- see \cite[Equation (1.8)]{hitrik_starov_ii} for the explicit description of the spectrum.

The second main result of this work is the following theorem, extending \cite[Theorem 1]{hitrik_starov_i}. It establishes resolvent estimates for $P$ in an $O(h)$--neighborhood of the critical value $p_0(0) \in i \R$ of the principal symbol of $p,$ away from the eigenvalues of the quadratic approximation, shifted by the value of the subprincipal symbol at the critical point.

\begin{theorem}\label{thm1}
    Assume that the assumptions 
    \textup{(\ref{ass_pos}), (\ref{ass_der}), (\ref{ass_symbol_class}), (\ref{ass_full_symbol}), 
    (\ref{ass_elliptic}), (\ref{ass_morse}), (\ref{ass_dyn_1}), (\ref{ass_dyn_2})} hold.
    Then, for each open neighborhood $\Omega \subseteq \C$ of $\text{Spec}(q^w(x,D))$ and each $C>1,$ there exists $h_0>0$ such that for all $0 < h \leq h_0$ and $\lambda \in D(0,C)$ satisfying $\lambda \not \in \Omega,$ the resolvent $(P-p_0(0)-hp_1(0)-h\lambda)^{-1}:L^2(\R^n) \to L^2(\R^n)$ exists and satisfies
    \begin{equation}
        \|(P-p_0(0)-hp_1(0)-h\lambda)^{-1}\|_{\mathcal{L}(L^2(\R^n),L^2(\R^n))}\leq O\li(\frac{1}{h}\ri).
    \end{equation}
\end{theorem}
\textit{Example.} Consider a semiclassical non-self-adjoint Schr\"{o}dinger operator,
\begin{equation}\label{eq:schrod_op}
    P=-h^2 \Delta+V+iW, \quad V,W \in C^\infty(\R^n;\R).
\end{equation}
Then, $P=p_0^w(x,hD)$ for the complex-valued symbol $p_0(x,\xi) = |\xi|^2+V(x)+iW(x).$ Since $m(x,\xi) = 1+|\xi|^2+V(x),$ assuming that
\begin{equation}
V\geq 0, \quad \partial^\alpha V, \partial^\alpha W \in L^\infty(\R^n), \quad |\alpha| \geq 2,
\end{equation}
and 
\begin{equation}
    |W(x)| \leq C(1+V(x)), \quad X \in \R^n,
\end{equation}
for some $C>0,$ the assumptions (\ref{ass_pos}), (\ref{ass_der}), and (\ref{ass_symbol_class}) are satisfied, and $p_0 \in S(m).$
Next, the ellipticity condition (\ref{ass_elliptic}) may be translated into the assumption
\begin{equation}\label{eq:elliptic_schrod}
\liminf_{|x| \to \infty} V(x)>0.
\end{equation}   
To satisfy (\ref{ass_morse}), it suffices to assume that
\begin{equation}\label{schrod_crit_set}
    V^{-1}(0) \cap (\nabla W)^{-1}(0) = \{0\},  
\end{equation}
so that $\mathcal{C} =\{0\}.$ We then have the Taylor expansion
\begin{equation}
    p_0(X) = iW(0)+q(X)+O(|X|^3), \quad X \to 0,
\end{equation}
where
\begin{equation}\label{eq:quad_schrod}
 q(x,\xi) = |\xi|^2+\frac12 (V''(0)x \cdot x + i  W''(0)x \cdot x).   
\end{equation}
Our final assumption is that the complex symmetric $n \times n$ matrix $V''(0)+iW''(0)$ is invertible. We then claim that the dynamical conditions (\ref{ass_dyn_1}), (\ref{ass_dyn_2}) hold for $P.$ Indeed, since $\im p_0(x,\xi)=W(x)$ and $\im q(x,\xi) = \frac12 W''(0)x \cdot x,$ one has that
\begin{equation}\label{eq:hamilton_flows_pq}
    e^{tH_{\im p_0}}(x,\xi)=(x,\xi -t\nabla W(x)), \quad e^{tH_{\im q}}(x,\xi)=(x,\xi -tW''(0)x),
\end{equation}
and explicit computations using (\ref{eq:avg_defn}) and (\ref{eq:hamilton_flows_pq}) show that
\begin{equation}\label{eq:schrod_avg}
    \la \re p_0\ra_{\im p_0,T}(x,\xi) =  |\xi|^2+V(x) +\frac{T^2}{3}|\nabla W(x)|^2 
\end{equation}
and
\begin{equation}
    \la \re q\ra_{\im q,T}(x,\xi) = |\xi|^2+\frac12  V''(0)x \cdot x +\frac{T^2}{3}|W''(0)x|^2 
\end{equation}
for each fixed $T>0.$ Using the fact that $V''(0)$ is positive semi-definite and $V''(0)+iW''(0)$ is invertible, we conclude that 
\begin{equation}
    \la \re q\ra_{\im q,T}(X) \sim |X|^2,
\end{equation}
and therefore (\ref{ass_dyn_1}) holds. Moreover, (\ref{schrod_crit_set}) and (\ref{eq:schrod_avg}) imply that for each $\epsilon>0,$ the function $\la \re p_0\ra_{\im p_0,T}$ is non-vanishing in the set $|X|>\epsilon,$ so using (\ref{eq:elliptic_schrod}), we conclude that (\ref{ass_dyn_2}) also holds. We can thus apply Theorems \ref{thm0} and \ref{thm1} to the operator in (\ref{eq:schrod_op}) with $p_0(0)=iW(0),p_1(0)=0.$ Let us also recall, following \cite[Theorem 1.2.2]{HitrikPravdaStarov2009} and \cite[Theorem 3.1]{bellis_hitrik}, that the spectrum of the quadratic Schr\"{o}dinger operator $\text{Spec}(q^w(x,D)),$ with $q$ given in (\ref{eq:quad_schrod}), is given by eigenvalues of the form
\begin{equation}
    \sum_{j=1}^n \frac{\lambda_j}{i} (1+2\nu_{j,l}), \quad \nu_{j,l} \in \N,
\end{equation}
where $\lambda_j \in \C$ are such that $\im \lambda_j>0$ and $-\lambda_j^2$ are the eigenvalues of the complex symmetric invertible matrix $\frac12 (V''(0)+iW''(0))$ (see also \cite{gallagher2009spectral}, \cite{schenker2011estimating}).

Following \cite{herau_sjostrand_stolk_kfp}, \cite{hitrik_starov_i}, \cite{hitrik_starov_ii}, the principal technical tool in the proofs of Theorems \ref{thm0} and \ref{thm1} is the use of (metaplectic) FBI-Bargmann transforms and the corresponding exponentially weighted spaces of holomorphic functions on the FBI-Bargmann transform side. The exponential weights are introduced so as to take advantage of the dynamical assumptions (\ref{ass_dyn_1}), (\ref{ass_dyn_2}), increasing the real part of the principal symbol of $p$ away from $0.$
 
The plan of the paper is as follows. In Section \ref{order_function} we reduce the proofs of the main results to the case of symbols $p \in S(1)$ that are bounded with all derivatives. A construction of a globally defined bounded phase space weight function, instrumental in the proofs of Theorems \ref{thm0} and \ref{thm1}, is then carried out in Section \ref{weight_functions}. This construction, which follows closely the works \cite{herau_sjostrand_stolk_kfp}, \cite{HerauHitrikSjostrand2008}, \cite{hitrik_starov_i}, is due to \cite{Stone2022} and is given here with the kind permission of the author. Basic facts concerning metaplectic FBI-Bargmann transforms, I-Lagrangian deformations of the real phase space $\R^{2n},$ and the associated exponentially weighted spaces of holomorphic functions are reviewed briefly in Section \ref{fbi_and_manifolds}. Section \ref{ext_estimate} is devoted to the proof of Theorem \ref{thm0}. A priori estimates for elliptic quadratic operators acting on quadratic Bargmann spaces are discussed in Section \ref{local_estimate}, as a preparation for the proof of Theorem \ref{thm1}, and this theorem is then established in Section \ref{glue_together}. Appendix \ref{appendix_a} discusses microlocally  weighted Sobolev spaces on $\R^n$ and their characterization on the FBI-Bargmann transform side.

\textbf{Acknowledgements.} I am most grateful to Matthew Stone for allowing me to include in Section \ref{weight_functions} the proof of Proposition \ref{G_prop}, taken from the unpublished Ph.D. thesis \cite{Stone2022}.
 
\section{Reduction to bounded symbols}\label{order_function}
The existence of a pseudodifferential calculus requires symbols with a sufficient amount of regularity and control of derivatives (see \cite[Chaper 18]{Hormander1985}). In the semiclassical framework, one typically deals with symbol classes (see, for example, Chapter 18 of \cite{Hormander1985})
\begin{equation}
 S(m) =\{a \in C^\infty(\R^{2n}):\forall \alpha \in \N^{2n}, \exists C_\alpha>0, |\partial^\alpha a|\leq C_\alpha m\}  
\end{equation}
for
a function $m: \R^{2n} \to (0,\infty)$ satisfying
\begin{equation}
    \exists C>0, \exists N \in \R, m(Y) \leq C \la X-Y \ra^N m(X), \quad X,Y \in \R^{2n},
\end{equation}
which is henceforth known as an \textit{order function} (see \cite[Chapter 7]{Dimassi_Sjostrand_1999}, \cite[Chapter 4]{Zworski2012}). Here, 
\begin{equation}
    \la X\ra:=(1+|X|^2)^{\frac12}
\end{equation}
is the Japanese bracket of $X.$ Our starting point is the following result:
\begin{lemma}\label{order_function_lemma}
    Suppose that $p_0 \in C^\infty(\R^{2n})$ satisfies the assumptions \textup{(\ref{ass_pos}), (\ref{ass_der}), (\ref{ass_symbol_class}).} Then, $m=1+\re p_0$ is an order function, and $p_0 \in S(m).$
\end{lemma}
\begin{proof}
    We must first show that $m=1+\re p_0$ is an order function. Indeed, by Taylor's theorem, (\ref{ass_pos}) and (\ref{ass_der}), and the classic result that 
    \begin{equation}\label{eq:double_derivative_estimate}
    0 \leq f \in C^2(\R^n), \quad |\nabla^2f| \in L^\infty(\R^n) \implies |\nabla f| =O(\sqrt{f}), 
    \end{equation}
    (see \cite[Lemma 4.31]{Zworski2012}), 
    we deduce that
    \begin{equation}
        \begin{split}
            m(X) & \leq m(Y)+ \nabla m(Y) \cdot (X-Y) + O(|X-Y|^2) \\
            & \leq m(Y) + C\sqrt{m(Y)}|X-Y| +O(|X-Y|^2) \\
            & \leq m(Y)+Cm(Y)+O(|X-Y|^2)
            \leq Cm(Y) \la X-Y\ra^2
        \end{split}
    \end{equation}
    using the fact that $m \geq 1.$
    This shows that $m$ is indeed an order function, so it remains to show that $p_0 \in S(m).$ Indeed, the symbolic estimates in the case $|\alpha|=0$ follow from (\ref{ass_symbol_class}),   while the case $|\alpha| \geq 2$ follows from (\ref{ass_der}) and the fact that $m \geq 1.$ The remaining case $|\alpha|=1$ then follows from (\ref{ass_pos}), (\ref{ass_der}), and (\ref{eq:double_derivative_estimate}), as
    \begin{equation}
        |\nabla \re p_0| \leq O(\sqrt{\re p_0}) \leq O(\sqrt{m}) \leq O(m),
    \end{equation}
    while (\ref{ass_symbol_class}) also yields 
    \begin{equation}
        |\nabla \im p_0| \leq |\nabla(Cm-\im p_0)|+O(|\nabla m|)\leq \sqrt{O(m)-\im p_0}+O(\sqrt{m}) \leq O(\sqrt{m}) \leq O(m).
    \end{equation}
    This completes the proof.
\end{proof}
We now demonstrate that it suffices to prove Theorems \ref{thm0} and \ref{thm1} in the case of symbols $p \in S(1).$ Indeed, assume that Theorems \ref{thm0}, \ref{thm1} have already been established in the case when $p_0 \in S(1)$ and the  assumptions (\ref{ass_pos}), (\ref{ass_full_symbol}), (\ref{ass_elliptic}), (\ref{ass_morse}), (\ref{ass_dyn_1}), (\ref{ass_dyn_2}) are satisfied with $m=1.$ Given $p \in S(m),$ with $m=1+\re p_0,$ satisfying the assumptions of Theorems \ref{thm0}, \ref{thm1}, let us set
\begin{equation}
    \overline{p}:= \chi p+(1-\chi)
\end{equation}
for a cutoff $\chi \in C_c^\infty(\R^{2n};[0,1])$ such that $\chi(X)=1$ in a neighborhood of $\{X:|X| \leq R\},$ where $R>0$ is large enough so that $(\re p_0)^{-1}(0) \Subset \{X:|X|<R\}.$ Then, $\overline{p} \in S(1),$ and (\ref{ass_full_symbol}) yields the expansion 
\begin{equation}
    \overline{p}(x,\xi) \sim \sum_{k=0}^\infty h^k \overline{p}_k(x,\xi), \quad \overline{p}_k := \chi p_k+(1-\chi),
\end{equation}
in the space $S(1).$ We now verify that the assumptions of Theorems \ref{thm0}, \ref{thm1} hold for $\overline{p}.$ The leading symbol $\overline{p}_0 = \chi p_0+(1-\chi) \in S(1)$ satisfies $\re \overline{p}_0 \geq 0$ and 
\begin{equation}
    \exists C>0, \quad |X|\geq C \implies \re \overline{p}_0(X) \geq \frac1C.
\end{equation}
Moreover,
\begin{equation}
    \{X \in \R^{2n}:\re \overline{p}_0(X) = H_{\im \overline{p}_0}(X)=0\}=\{X \in \R^{2n}: \re p_0(X)= H_{\im p_0}(X)=0\} = \{0\},
\end{equation}
so (\ref{ass_morse}) also holds for $\overline{p}_0.$ 

We have $\overline{p}_0=p_0$ in a neighborhood of the origin, and therefore
\begin{equation}
    \overline{p}_0(X)= p_0(0)+q(X)+O(|X|^3), \quad X \to 0,
\end{equation}
so the quadratic form $q$ satisfies (\ref{ass_dyn_1}). It thus remains to verify the dynamical assumption (\ref{ass_dyn_2}) for $\overline{p}_0,$ provided that it holds for $p_0.$
\begin{prop}
    There exists some $T>0$ such that for every $\epsilon>0,$ there exists $\delta >0$ such that
    \begin{equation}
  |X| > \epsilon \implies \la \re \overline{p}_0\ra_{\im \overline{p}_0,T}(X) > \delta.  
\end{equation}
\end{prop}
\begin{proof}
Let $T>0$ be fixed and such that (\ref{ass_dyn_2}) holds for $p_0.$ Arguing as in \cite[Proposition 2]{hitrik_starov_i} and using that $H_{\im \overline{p}_0}(0)=0,$ we conclude that there exists $C_T>0$ such that \begin{equation}\label{eq:gronwall}
    |e^{t H_{\im \overline{p}_0}}(X)-X| \leq C_T t|X|, \quad |t| \leq T, \quad X \in \R^{2n}.
\end{equation} 
We set
\begin{equation}
    R' = \frac{R}{C_T T+1}
\end{equation}
and note that (\ref{eq:gronwall}) implies that
\begin{equation}\label{eq:flow_bound}
 |X| \leq R' \implies |e^{t H_{\im \overline{p}_0}}(X)| \leq R, \quad |t| \leq T.     
\end{equation}
By increasing $R$ if necessary, we may pick $R'$ so that $\re \overline{p}_0$ is elliptic in the region $|X| \geq \frac{R'}{2}.$ 

Next, for $|X| \leq R'$ and $|t| \leq T,$ let us write 
\begin{equation}
    e^{tH_{\im p_0}}X = X + \int_0^t H_{\im p_0}(e^{sH_{\im p_0}}X)ds
\end{equation}
and 
\begin{equation}
    e^{tH_{\im \overline{p}_0}}X = X + \int_0^t H_{\im \overline{p}_0}(e^{sH_{\im \overline{p}_0}}X)ds.
\end{equation}

Then,
\begin{equation}\label{eq:ode_split}
\begin{split}
     e^{tH_{\im p_0}}X -  e^{tH_{\im \overline{p}_0}}X & = \int_0^t (H_{\im p_0} (e^{sH_{\im p_0} }X)-H_{\im \overline{p}_0}(e^{sH_{\im \overline{p}_0}}X))ds \\ &
     = \int_0^t (H_{\im p_0}(e^{sH_{\im p_0}}X)-H_{\im p_0}(e^{sH_{\im \overline{p}_0}}X))ds\\
     & +\int_0^t (H_{\im p_0}(e^{sH_{\im \overline{p}_0}}X)-H_{\im \overline{p}_0}(e^{sH_{\im \overline{p}_0}}X))ds.
\end{split}
\end{equation}
Now, since $p_0 =\overline{p}_0$ in a neighborhood of $\{X:|X| \leq R\},$ one has that $H_{\im p_0}=H_{\im \overline{p}_0}$ in a neighborhood of $\{X:|X| \leq R\},$ and hence for $|X| \leq R'$ and $|t| \leq T,$ (\ref{eq:flow_bound}) implies that the second term on the right hand side of (\ref{eq:ode_split}) vanishes. Then, using that $H_{\im p_0}$ is globally Lipschitz yields 
\begin{equation}
    |e^{tH_{\im p_0}}X -  e^{tH_{\im \overline{p}_0}}X| \leq C\int_0^t|e^{sH_{\im p_0}}X -  e^{sH_{\im \overline{p}_0}}X| ds, \quad 0 \leq t \leq T, \quad |X| \leq R',
\end{equation}
with a similar inequality for $-T \leq t \leq 0.$ A standard Gr\"{o}nwall-type argument then implies that 
\begin{equation}
    e^{t H_{\im p_0}}X = e^{t H_{\im \overline{p}_0}}X, \quad |X| \leq R', \quad |t| \leq T,
\end{equation}
and hence
\begin{equation}\label{eq:avg_near}
    \la \re \overline{p}_0\ra_{\im \overline{p}_0,T}(X)=\la \re p_0\ra_{\im p_0,T}(X), \quad |X| \leq R'.
\end{equation}

On the other hand,  (\ref{eq:gronwall}) implies that
\begin{equation}
    |e^{t H_{\im \overline{p}_0}}X| \geq \frac{|X|}{2}, \quad |t| \leq \min\li(\frac{1}{2C_T}, T\ri),
\end{equation}
and hence
\begin{equation}
    |e^{t H_{\im \overline{p}_0}}X| \geq \frac{R'}{2}, \quad |t| \leq \min\li(\frac{1}{2C_T}, T\ri),
\end{equation}
for $|X| \geq R'.$ Since $\re \overline{p}_0$ is elliptic in the region $|X| \geq \frac{R'}{2},$ we thus obtain that 
\begin{equation}\label{eq:avg_far}
    \la \re \overline{p}_0\ra_{\im \overline{p}_0,T}(X) \geq \frac{1}{C}, \quad |X| \geq R',
\end{equation}
for some $C>0.$ Then, combining (\ref{eq:avg_near}) and (\ref{eq:avg_far}) concludes the proof of the proposition.
\end{proof}
Following \cite[Section 1]{hitrik_starov_i}, we now explain how one may obtain the resolvent estimate for $P=p^w(x,hD;h)$ in Theorems \ref{thm0}, \ref{thm1} from the corresponding resolvent estimate for $\overline{P}:=\overline{p}(x,hD;h).$ When doing so, we may assume that $p_0(0)=0,$ and will discuss the corresponding reduction in the case of Theorem \ref{thm1}. Fix any open neighborhood $\Omega \subseteq \C$ of $\text{Spec}(q^w(x,D))$ and $C>1.$ Then, the resolvent estimate for $\overline{P}$ yields for all $h>0$ small enough, all $\lambda \in D(0,C)$ satisfying $\lambda -p_1(0) \not \in \Omega,$ and all $u \in \s(\R^n),$ we have the a priori estimate 
\begin{equation}\label{eq:apriori}
 h\|u\| \leq O(1)\|(\overline{P}-h\lambda)u\|.
\end{equation}
Here and below, $L^2(\R^n)$ norms are used throughout. Let $\widetilde{\chi} \in C_c^\infty(\R^{2n};[0,1])$ be a smooth cutoff such that $\widetilde{\chi} = 1$ in a neighborhood of $(\re p_0)^{-1}(0)$ and $\chi =1$ near $\text{supp } \widetilde{\chi}.$ Using the ellipticity of $p-h\lambda$ near the support of $1-\widetilde{\chi},$ we may construct a pseudodifferential parametrix $A,$ bounded on $L^2(\R^n)$ uniformly in $h,$ such that
\begin{equation}
    A(P-h\lambda) = 1-\widetilde{\chi}^w +O_{L^2 \to L^2}(h^\infty).
\end{equation}
Thus, we obtain
\begin{equation}\label{eq:micro_inv_1}
    \|(1-\widetilde{\chi}^w)u\| \leq O(1)\|(P-h\lambda)u\|+O(h^\infty)\|u\|
\end{equation}
for all $u \in \s(\R^n).$
Moreover, (\ref{eq:apriori}) yields that
\begin{equation}\label{eq:micro_inv_2}
    h \|\widetilde{\chi}^wu\|\leq O(1)\|(\overline{P}-h\lambda)\widetilde{\chi}^wu\|\leq  O(1)\|(P-h\lambda)\widetilde{\chi}^w u\| +O(h^\infty)\|u\|,
\end{equation}
where we use the fact that $(P-\overline{P}) \widetilde{\chi}^w = O(h^\infty)$ in $\mathcal{L}(L^2(\R^n),L^2(\R^n)),$ since the supports of $p-\overline{p}$ and $\widetilde{\chi}$ are disjoint. 
On the other hand,
\begin{equation}\label{eq:micro_inv_3}
    \|(P-h\lambda)\widetilde{\chi}^w u\| \leq \|\widetilde{\chi}^w(P-h\lambda)u\| +\|[P,\widetilde{\chi}^w]u\| \leq O(1)\|(P-h\lambda)u\| +\|[P,\widetilde{\chi}^w]u\|.
\end{equation}
We note that the commutator $[P,\widetilde{\chi}^w]$ is compactly microlocalized in the region $\text{supp}(\nabla \widetilde{\chi}),$ which follows from the asymptotic expansion for the symbol of the commutator and the fact that it can be expressed in terms of the derivatives of $\widetilde{\chi}$. Letting $\psi \in C_c^\infty(\R^{2n};[0,1])$ be such that $\psi =1$ in a neighborhood of $(\re p_0)^{-1}(0)$ and $\widetilde{\chi}=1$ near $\text{supp }\psi,$ we thus get
\begin{equation}\label{eq:micro_inv_4}
\begin{split}
     \|[P,\widetilde{\chi}^w]u\| & \leq\|[P,\widetilde{\chi}^w](1-\psi^w)u\| + \|[P,\widetilde{\chi}^w]\psi^wu\| \\ & \leq O(1) \|(1-\psi^w)u\|+O(h^\infty)\|u\| \leq O(1)\|(P-h\lambda)u\| +O(h^\infty)\|u\|,
\end{split}
\end{equation}
where we use the uniform boundedness of the commutator on $L^2(\R^n)$ and the ellipticity of $p-h\lambda$ near the support of $1-\psi.$

Putting the estimates (\ref{eq:micro_inv_1}), (\ref{eq:micro_inv_2}), (\ref{eq:micro_inv_3}), and (\ref{eq:micro_inv_4}) together, we thus obtain 
\begin{equation}
    h\|u\| \leq h\|\widetilde{\chi}^w u\| +h\|(1-\widetilde{\chi}^w)u\| \leq O(1)\|(P-h\lambda)u\| +O(h^\infty)\|u\|. 
\end{equation}
Hence, for small enough $h>0,$ one deduces the a priori estimate  \begin{equation}\label{eq:bdd_below}
    h\|u\| \leq O(1) \|(P-h\lambda)u\|
\end{equation}
for all $u \in \s(\R^n).$

By density, the estimate (\ref{eq:bdd_below}) extends to all $u \in \mathcal{D}(P)=H(m).$ It follows that $P-h\lambda: H(m)\to L^2(\R^n)$ is injective and has closed range, and thus invertible in view of the Fredholm property. We thus conclude that $P-h \lambda :H(m) \to L^2(\R^n)$ is bijective, and hence the resolvent $(P-h \lambda)^{-1} :L^2(\R^n) \to L^2(\R^n)$ is well-defined and satisfies the bound
\begin{equation}
    \|(P-h\lambda)^{-1}\|_{\mathcal{L}(L^2(\R^n),L^2(\R^n))} \leq \frac{C}{h}
\end{equation}
for some $C>0.$ We shall henceforth prove both Theorems \ref{thm0} and \ref{thm1} for symbols $p \in S(1).$

\section{Method of averaging and bounded exponential weights}\label{weight_functions}
The purpose of this section is to construct a globally defined phase space exponential weight $G=G_\epsilon \in C_c^\infty(\R^{2n};
\R),$ which will play a crucial role in the proof of Theorems \ref{thm0}, \ref{thm1}. Here, $0<\epsilon \ll 1$ is a small parameter which will be later chosen to be proportional to $h.$ The following construction is inspired by closely related techniques in the previous works of \cite{herau_sjostrand_stolk_kfp}, \cite{HerauHitrikSjostrand2008}, \cite{hitrik_starov_i}, following a long tradition of works on resonances and non-self-adjoint operators, such as \cite{helffer_sjostrand_resonances} and \cite{MelinSjostrand2003}. In particular, the exponential weights will be introduced to compensate for the lack of ellipticity of the principal symbol in (\ref{ass_pos}), (\ref{ass_der}), (\ref{ass_symbol_class}), both at the level of the quadratic approximation at the doubly characteristic point $(0,0) \in \R^{2n}$ as well as in a compact set away from the origin. The results of this section have been established in the unpublished Ph.D. thesis \cite{Stone2022}, and are reproduced here with the kind permission of the author.  

Let $p_0 \in S(1),$ and consider an almost holomorphic extension $\widetilde{p}_0 \in C^\infty(\C^{2n})$ of $p_0$ supported in a tubular neighborhood of $\R^{2n}$ in $\C^{2n}$ and such that $\partial^\alpha \widetilde{p}_0 \in L^\infty(\C^{2n})$ for all $\alpha \in \N^{4n}.$ Thus, $\widetilde{p}_0|_{\R^{2n}}=p_0,$ and one has that 
\begin{equation}
    |\overline{\partial}\widetilde{p}_0(x,\xi)| \leq C_N |\im (x,\xi)|^N
\end{equation}
for all $N \in \N$ (see \cite[Section 1]{MelinSjoestrand1975}, \cite[Section 1.4]{almost_holomorphic_ref}).

The following proposition, due to \cite{Stone2022}, is the main result of this section.
\begin{prop}\label{G_prop}
    Let $p_0 \in S(1)$ be such that the assumptions \textup{(\ref{ass_pos}), (\ref{ass_elliptic}), (\ref{ass_morse}), (\ref{ass_dyn_1}),} and \textup{(\ref{ass_dyn_2})} hold, and let $\widetilde{p}_0 \in C_b^\infty(\C^{2n})$ be an almost holomorphic extension of $p_0,$ as above. 
    Then, there exist constants $C > 1, 0<\delta_0 \leq 1, 0<\epsilon_0 \leq 1,$ and a function $G_\epsilon \in C_c^\infty(\R^{2n};\R),$ depending on the parameter $0<\epsilon \leq \epsilon_0,$ satisfying
    \begin{equation}\label{eq:g_eps_control}
        |\partial^\alpha G_\epsilon(X)|=O(\epsilon^{1-\frac{|\alpha|}{2}}), \quad |\alpha| \leq 2,
    \end{equation}
    uniformly on $\R^{2n}.$ Moreover, for all $0<\epsilon \leq \epsilon_0, 0 < \delta \leq \delta_0,$ one has  \begin{equation}\label{eq:elliptic_local_final}
        \re (\widetilde{p}_0(X+i \delta H_{G_\epsilon}(X))) \geq \frac{\delta |X|^2}{C}, \quad X \in \R^{2n},\quad |X| \leq \epsilon^{\frac12},
    \end{equation}
    \begin{equation}\label{eq:elliptic_exterior_final}
        \re (\widetilde{p}_0(X+i \delta H_{G_\epsilon}(X))) \geq \frac{\delta \epsilon}{C}, \quad X \in \R^{2n}, \quad |X| \geq \epsilon^{\frac12}.
    \end{equation}
\end{prop}
While proving Proposition \ref{G_prop}, we may assume that $p_0(0)=0.$ Our starting point is the following result, established in \cite[Proposition 2]{hitrik_starov_i}: 

\begin{lemma}\label{lemma_avg}
    For each fixed $T>0,$ one has
    \begin{equation}
        \la \re p_0\ra_{\im p_0,T}(X) = \la \re q\ra_{\im q,T}(X)+O_T(|X|^3), \quad |X| \to 0.
    \end{equation}
\end{lemma}

We now introduce a modified symbol associated to $\re p_0,$ which will be used in the construction of the weight function $G_\epsilon$. Let $g \in C^\infty([0,\infty); [0,1])$ be a decreasing function such that 
\begin{equation}
    g(t) = 1, \quad t \in [0,1]
, \quad g(t) = t^{-1}, \quad t \geq 2.\end{equation}
Then, for each $k \in \N,$ one has 
\begin{equation}\label{eq:g_der_bound}
    |g^{(k)}(t)| = O\li(\frac{1}{\la t\ra^{k+1}}\ri)
\end{equation}
as $t \to \infty.$ Now, letting $\chi_0 \in C_c^\infty(\R^{2n};[0,1])$ be a smooth cutoff such that $\chi_0 = 1$ in a small neighborhood near the origin, we define the modified symbol
\begin{equation}\label{eq:modified_symbol_defn}
    (\re p_0)_\epsilon(X) := \chi_0(X)g\li(\frac{|X|^2}{\epsilon}\ri) \re p_0(X) + \epsilon(1-\chi_0(X))\re p_0(X).
\end{equation}
It follows from (\ref{eq:modified_symbol_defn}) that the constructed symbol satisfies
\begin{equation}\label{eq:interp_bound}
    0\leq (\re p_0)_\epsilon \leq \re p_0,
\end{equation}
and we have 
\begin{equation}\label{eq:symbol_estimates}
\begin{split}
    (\re p_0)_\epsilon(X) & = \re p_0(X), \quad |X| \leq \epsilon^{\frac12}, \\
    (\re p_0)_\epsilon(X) & = g\li(\frac{|X|^2}{\epsilon}\ri)\re p_0(X) \sim \frac{\epsilon}{|X|^2} \re p_0(X), \quad \epsilon^{\frac12} \leq |X| \leq \frac{1}{C}, \\
    (\re p_0)_\epsilon(X) & = \chi_0(X) \frac{\epsilon}{|X|^2} \re p_0(X)+\epsilon (1-\chi_0(X)) \re p_0(X)\sim \epsilon \re p_0(X), \quad |X|  \geq \frac{1}{C},
\end{split}
\end{equation}
for some $C>0$ large enough depending on  $\chi_0,$ but not on $\epsilon.$

Direct computations using (\ref{ass_quad}), (\ref{eq:g_der_bound}), and (\ref{eq:symbol_estimates}) imply that 
\begin{equation}\label{eq:eps_bound_on_symbol}
    |\partial^\alpha (\re p_0)_\epsilon(X)| = O(\epsilon^{1-\frac{|\alpha|}{2}}), \quad |\alpha| \leq 2,
\end{equation}
uniformly on $\R^{2n}.$
    
We shall now introduce a bounded weight function. For $T>0$ fixed, we set 
\begin{equation}\label{eq:weight_fun_defn}
    G_\epsilon(X) := \int_{-\infty}^\infty J\li(\frac{t}{T}\ri) (\re p_0)_\epsilon (e^{t H_{\im p_0}}(X))dt,
\end{equation}
where $J: \R \to \R$ is a piecewise linear compactly supported function satisfying
\begin{equation}\label{eq:j_prime}
    J'(t) = \delta(t)-\frac12 1_{[-1,1]}(t).
\end{equation}
Here, $1_{[-1,1]}$ is the characteristic function of the interval $[-1,1].$ The utility of the weight function is demonstrated in the following lemma:
\begin{lemma}
    The weight function $G_\epsilon$ satisfies
    \begin{equation}\label{eq:lemma_mod_symbol}
        H_{\im p_0} G_\epsilon = \la (\re p_0)_\epsilon\ra_{\im p_0,T} - (\re p_0)_\epsilon.
    \end{equation}
\end{lemma}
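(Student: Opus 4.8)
The claimed identity is, after one structural observation, a single integration by parts, and I would organize the argument accordingly.

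\emph{Step 1: differentiation along the flow is differentiation in the flow parameter.} The key point is that for the smooth symbol $b:=\im p_0$ (in the reduced setting of this section, $p_0\in S(1)$, so $H_b$ has bounded derivatives of all orders and its flow $e^{tH_b}$ is complete and smooth jointly in $(t,X)$ with all $X$-derivatives bounded on compact time intervals, cf. (\ref{ass_der})), and for any fixed $s\in\R$, one has
\begin{equation}
    H_b\bigl[X\mapsto f(e^{sH_b}(X))\bigr]=\frac{d}{ds}\bigl[f(e^{sH_b}(X))\bigr]=(H_bf)(e^{sH_b}(X)).
\end{equation}
This follows from the chain rule together with the identity $(de^{sH_b})_X\bigl(H_b(X)\bigr)=H_b(e^{sH_b}(X))$, which is obtained by differentiating the group law $e^{sH_b}\circ e^{\tau H_b}=e^{(s+\tau)H_b}$ in $\tau$ at $\tau=0$.

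\emph{Step 2: apply $H_{\im p_0}$ to $G_\epsilon$ and integrate by parts.} Applying $H_{\im p_0}$ to the definition (\ref{eq:weight_fun_defn}) and differentiating under the integral sign — legitimate because $J$ is compactly supported, so the $t$-integral is over a fixed compact interval, and $(\re p_0)_\epsilon$ is smooth with bounded first derivatives (\ref{eq:eps_bound_on_symbol}) while the flow has bounded $X$-derivatives on that interval — Step 1 gives, with $F(t):=(\re p_0)_\epsilon(e^{tH_{\im p_0}}(X))$,
\begin{equation}
    H_{\im p_0}G_\epsilon(X)=\int_{-\infty}^{\infty}J\!\li(\frac{t}{T}\ri)F'(t)\,dt
    =-\int_{-\infty}^{\infty}F(t)\,\frac{d}{dt}\Bigl[J\!\li(\frac{t}{T}\ri)\Bigr]\,dt,
\end{equation}
the boundary terms vanishing since $J$ has compact support. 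Now $\frac{d}{dt}J(t/T)=\frac1T J'(t/T)$, and substituting (\ref{eq:j_prime}) together with the rescaling identities $\frac1T\delta(t/T)=\delta(t)$ and $\chi_{[-1,1]}(t/T)=\chi_{[-T,T]}(t)$ yields
\begin{equation}
    H_{\im p_0}G_\epsilon(X)=-F(0)+\frac{1}{2T}\int_{-T}^{T}F(t)\,dt
    =-(\re p_0)_\epsilon(X)+\la(\re p_0)_\epsilon\ra_{\im p_0,T}(X),
\end{equation}
which is exactly the assertion, recalling the definition (\ref{eq:avg_defn_general}) of the average. If one prefers to avoid distributions, the same conclusion follows by using the explicit piecewise-linear primitive of (\ref{eq:j_prime}) with compact support, namely $J(t)=\tfrac12(1-t)$ on $[0,1]$, $J(t)=-\tfrac12(1+t)$ on $[-1,0]$, and $J\equiv 0$ elsewhere, and integrating by parts separately on $[-T,0]$ and $[0,T]$: the jump of $J$ of size $1$ at the origin produces the term $-F(0)=-(\re p_0)_\epsilon(X)$, and the constant slope $-\tfrac{1}{2T}$ of $t\mapsto J(t/T)$ on $(-T,T)$ produces the averaged term.

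\emph{Obstacles.} There is essentially no serious difficulty. The only points meriting a line of justification are the interchange of $H_{\im p_0}$ with the $t$-integration (controlled by the compact support of $J$ and the smoothness and bounded derivatives of $(\re p_0)_\epsilon$ and of the flow) and the correct bookkeeping of the rescaled Dirac mass $\delta(t/T)=T\delta(t)$ in the integration by parts — equivalently, the size-$1$ jump of $J$ at $t=0$. Everything else is the elementary fact from Step 1 that differentiating along the Hamiltonian flow of $\im p_0$ coincides with differentiating in the flow time $t$.
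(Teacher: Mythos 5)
Your proof is correct and follows essentially the same route as the paper's: both compute $H_{\im p_0}G_\epsilon$ by converting the Hamiltonian derivative into a $\partial_t$ derivative along the flow via the group law, differentiating under the integral, integrating by parts against $J$, and then using (\ref{eq:j_prime}) with the scaling $\delta(t/T)=T\delta(t)$ to produce the average minus the pointwise value. Your extra paragraph giving the distribution-free version (explicit primitive with a unit jump at $0$) is a harmless, equivalent reformulation of the same computation.
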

\begin{proof}
By the definition of the Hamilton flow, one may apply integration by parts and (\ref{eq:j_prime}) to obtain
\begin{equation}
\begin{split}
     H_{\im p_0}G_\epsilon(X)
     & = \partial_s|_{s=0} G_\epsilon(e^{s H_{\im p_0}}(X))\\ & = 
     \int_{-\infty}^\infty J\li(\frac{t}{T}\ri)\partial_s|_{s=0}\li[(\re p_0)_\epsilon(e^{(t+s)H_{\im p_0}}(X))\ri]dt \\
     & = \int_{-\infty}^\infty J\li(\frac{t}{T}\ri)\partial_t\li[(\re p_0)_\epsilon(e^{tH_{\im p_0}}(X))\ri]dt \\
     & = -\int_{-\infty}^\infty \partial_t \li[J\li(\frac{t}{T}\ri)\ri](\re p_0)_\epsilon(e^{t H_{\im p_0}}(X))dt \\
     & = -\frac{1}{T}\int_{-\infty}^\infty \li[T\delta(t)-\frac{1_{[-T,T]}(t)}{2}\ri](\re p_0)_\epsilon (e^{tH_{\im p_0}}(X)) dt \\
     & = \la (\re p_0)_\epsilon\ra_{\im p_0,T}(X) - (\re p_0)_\epsilon(X).
\end{split}
\end{equation}
\end{proof}
Then, (\ref{eq:eps_bound_on_symbol}) and differentiating (\ref{eq:weight_fun_defn}) under the integral sign yields
\begin{equation}\label{eq:G_der_bound}
    |\partial^\alpha G_\epsilon(X)| =O(\epsilon^{1-\frac{|\alpha|}{2}}), \quad |\alpha| \leq 2,
\end{equation}
uniformly for $X\in \R^{2n}$. Here we also use that
\begin{equation}
    |\partial^\alpha_X (e^{tH_{\im p_0}}X)| \leq O_T(1), \quad 1 \leq |\alpha| \leq 2, \quad |t| \leq T, \quad X \in \R^{2n}.
\end{equation}
We note that since
\begin{equation}
    |e^{tH_{\im p_0}}(X)| \leq e^{K|t|}|X|, \quad X \in \R^{2n}, \quad t \in \R,  
\end{equation}
for some $K>0,$ the first estimate in (\ref{eq:symbol_estimates}) and (\ref{eq:lemma_mod_symbol}) together imply that 
\begin{equation}
    H_{\im p_0}G_\epsilon(X) = \la \re p_0\ra_{\im p_0,T}(X)-\re p_0(X), \quad |X| \leq \frac{\epsilon^{\frac12}}{C},
\end{equation}
for some $C>1$ depending on $K$ and $T$ only. Furthermore, using (\ref{eq:weight_fun_defn}), we get that
\begin{equation}
    G_\epsilon(X) := \int_{-\infty}^\infty J\li(\frac{t}{T}\ri) \re p_0 (e^{t H_{\im p_0}}(X))dt, \quad |X| \leq \frac{\epsilon^{\frac12}}{C},
\end{equation}
so that using Lemma \ref{lemma_avg} as in \cite[Section 2]{hitrik_starov_i}, one gets 
\begin{equation}\label{eq:G_eps_G_zero_Taylor}
    G_\epsilon(X)=G_0(X)+O(|X|^3), \quad |X| \leq \frac{\epsilon^{\frac12}}{C}, 
\end{equation}
where
\begin{equation}\label{eq:G_zero_defn}
    G_0(X):= \int_{-\infty}^\infty J\li(\frac{t}{T}\ri) \re q(e^{tH_{\im q}}(X))dt.
\end{equation}
In particular, we have
\begin{equation}\label{eq:g_eps_der_control}
   G_\epsilon(X)=O(|X|^2), \quad \nabla G_\epsilon(X) = O(|X|), \quad |X| \leq \frac{\epsilon^{\frac12}}{C}.
\end{equation}

It remains to demonstrate the improved ellipticity estimates (\ref{eq:elliptic_local_final}) and (\ref{eq:elliptic_exterior_final}). Let $\widetilde{p}_0$ be an almost holomorphic extension of the principal symbol $p_0,$ as described above. Then, for $\delta>0$ sufficiently small, Taylor's theorem implies that
\begin{equation}\label{eq:pre_taylor}
\begin{split}
    \widetilde{p}_0(X+i\delta H_{G_{\epsilon}}(X)) & = p_0(X) + i\delta(\partial \widetilde{p}_0(X) \cdot H_{G_\epsilon}(X))+O(\delta^2|H_{G_\epsilon}(X)|^2) \\
    & = p_0(X) + i\delta H_{G_\epsilon}p_0(X)+O(\delta^2|\nabla G_\epsilon(X)|^2), \quad X \in \R^{2n}.
\end{split} 
\end{equation}
Taking the real part and using (\ref{eq:lemma_mod_symbol}) gives 
\begin{equation}\label{eq:hol_taylor}
\begin{split}
  \re \widetilde{p}_0(X+i\delta H_{G_\epsilon}(X))&=\re p_0(X)+\delta H_{\im p_0}G_\epsilon(X) + O(\delta^2 |\nabla G_\epsilon(X)|^2) \\
    & = \re p_0(X)+\delta (\la (\re p_0)_\epsilon\ra_{\im p_0,T}(X) - (\re p_0)_\epsilon(X)) + O(\delta^2 |\nabla G_\epsilon(X)|^2).
\end{split}
\end{equation}
For a sufficiently large constant $C>1,$ we consider the expression on the right hand side of (\ref{eq:hol_taylor}) in four different regions, as follows: $|X|^2 \leq \frac{\epsilon}{C},$ the intermediate region $\frac{\epsilon}{C}\leq |X|^2 \leq C \epsilon,$ the region $C\epsilon \leq |X|^2 \leq \frac{1}{C},$ and the unbounded region $|X|^2 \geq \frac{1}{C}.$

We first handle the region $|X|^2 \leq \frac{\epsilon}{C},$ where 
\begin{equation}
    (\re p_0)_\epsilon = \re p_0, \quad \la(\re p_0)_\epsilon\ra_{\im p_0,T} = \la\re p_0\ra_{\im p_0,T}.
\end{equation}
In this region, Lemma \ref{lemma_avg}, (\ref{eq:g_eps_der_control}), and (\ref{eq:hol_taylor}) then imply
\begin{equation}
\begin{split}
     \re \widetilde{p}_0(X+i\delta H_{G_\epsilon}(X)) & =\re p_0(X)+\delta (\la \re p_0\ra_{\im p_0,T}(X) - \re p_0(X)) + O(\delta^2 |\nabla G_\epsilon(X)|^2) \\
     & = (1-\delta) \re p_0(X)+\delta \la \re q \ra_{\im q,T}(X)+O(\delta|X|^3+\delta^2|X|^2).
\end{split}
\end{equation}
Since $\re p_0 \geq 0$ and $\la \re q\ra_{\im q,T}(X) \sim |X|^2,$ we obtain in view of (\ref{ass_dyn_1}) that there exists $\widetilde{C}>0$ such that
\begin{equation}\label{eq:elliptic_local}
    \re \widetilde{p}_0(X+i\delta H_{G_\epsilon}(X)) \geq \frac{\delta|X|^2}{\widetilde{C}}-O(\delta |X|^3+\delta^2 |X|^2) \geq \frac{\delta|X|^2}{2\widetilde{C}}, \quad |X|^2 \leq \frac{\epsilon}{C},
\end{equation}
provided that $\delta>0$ and $\epsilon>0$ are small enough.

Next, we consider the region $C\epsilon \leq |X|^2 \leq \frac{1}{C}.$ Here we have
\begin{equation}
    (\re p_0)_\epsilon(X) = g\li(\frac{|X|^2}{\epsilon}\ri)\re p_0(X)=\frac{\epsilon}{|X|^2} \re p_0(X)
\end{equation}
and 
\begin{equation}
    |e^{tH_{\im p_0}}X|^2 \geq 2\epsilon, \quad \chi_0(e^{tH_{\im p_0}}X)=1, \quad |t| \leq T,
\end{equation}
provided that $C>1$ is large enough. Using (\ref{eq:G_der_bound}) and
(\ref{eq:hol_taylor}), in this region we get
\begin{equation}\label{eq:elliptic_exterior}
\begin{split}
  \re \widetilde{p}_0(X+i\delta H_{G_\epsilon}(X))&=\re p_0(X)-\delta(\re p_0)_\epsilon(X)+\frac{\delta \epsilon}{2T} \int_{-T}^T \frac{\re p_0(e^{t H_{\im p_0}}(X))}{|e^{tH_{\im p_0}}(X)|^2}dt + O(\delta^2 \epsilon).
\end{split}
\end{equation}
To better understand the integral on the right hand side of (\ref{eq:elliptic_exterior}), we recall from \cite[Equation (2.12)]{hitrik_starov_i} that there exists $c>0$ such that
\begin{equation}
|e^{t H_{\im p_0}}(X)-e^{t H_{\im q}}(X)|\leq ct|X|^2 , \quad |t| \leq T, \quad X \in \R^{2n}.    
\end{equation} 
It follows that
\begin{equation}\label{eq:int_avg_1}
    \frac{1}{2T} \int_{-T}^T \frac{\re p_0(e^{t H_{\im p_0}}(X))}{|e^{tH_{\im p_0}}(X)|^2}dt =\frac{1}{2T} \int_{-T}^T \frac{\re q(e^{t H_{\im q}}(X))}{|e^{tH_{\im p_0}}(X)|^2}dt +O(|X|)
\end{equation}
and
 \begin{equation}\label{eq:int_avg_2}
     \frac{1}{|e^{tH_{\im p_0}}(X)|^2} - \frac{1}{|e^{tH_{\im q}}(X)|^2} = O\li(\frac{t|X|^3}{|X|^4}\ri), \quad |t| \leq T.
 \end{equation}
 Therefore, combining (\ref{eq:int_avg_1}) and (\ref{eq:int_avg_2}) gives
 \begin{equation}\label{eq:avg_f_defn}
     \frac{1}{2T} \int_{-T}^T \frac{\re p_0(e^{t H_{\im p_0}}(X))}{|e^{tH_{\im p_0}}(X)|^2}dt =f(X)+O(|X|),
 \end{equation}
 where
 \begin{equation}
     f(X)= \frac{1}{2T} \int_{-T}^T \frac{\re q(e^{t H_{\im q}}(X))}{|e^{tH_{\im q}}(X)|^2}dt, \quad X \not =0.
 \end{equation}
We note that $f$ is non-negative and homogeneous of degree zero, and it follows from (\ref{ass_dyn_1}) that $f$ is in fact strictly positive on $\R^{2n} \setminus \{0\}.$ Thus, for some $D>0$ we have
\begin{equation}\label{eq:homog_fun}
    f(X) \geq \frac{1}{D}, \quad X \not =0.
\end{equation}
Since $0 \leq g(t) \leq 1$ for all $t,$ one has $\re p_0(X) - \delta(\re p_0)_\epsilon(X) \geq 0$ for all $0<\delta \leq 1,$ so by combining (\ref{eq:elliptic_exterior}) with (\ref{eq:avg_f_defn}) and (\ref{eq:homog_fun}), we get 
\begin{equation}
    \re \widetilde{p}_0(X+i\delta H_{G_\epsilon}(X)) \geq \frac{\delta \epsilon}{D} -O(\delta^2 \epsilon)-O(\delta \epsilon |X|)
\end{equation}
in the region $C \epsilon \leq |X|^2 \leq \frac{1}{C}.$ As a result,
 for $\delta>0$ sufficiently small and $C>1$ sufficiently large,
\begin{equation}
     \re \widetilde{p}_0(X+i\delta H_{G_\epsilon}(X)) \geq \frac{\delta \epsilon}{\widetilde{C}}, \quad C\epsilon \leq |X|^2 \leq \frac{1}{C}
\end{equation}
for some $\widetilde{C}>0.$

We will next handle the intermediate region $\frac{\epsilon}{C} \leq |X|^2 \leq C\epsilon.$  In this region, we have 
\begin{equation}
 g\li(\frac{|e^{tH_{\im p_0}}(X)|^2}{\epsilon}\ri) \sim 1   
\end{equation}
uniformly in $|t| \leq T$ and $0 < \epsilon \leq 1.$ It follows from (\ref{eq:hol_taylor}) that for $\delta>0$ and $\epsilon>0$ small enough,
\begin{equation}\label{eq:elliptic_intermediate}
\begin{split}
  \re \widetilde{p}_0(X+i\delta H_{G_\epsilon}(X))&=\re p_0(X)-\delta(\re p_0)_\epsilon(X) \\ & +\frac{\delta }{2T} \int_{-T}^T g\li(\frac{|e^{tH_{\im p_0}}(X)|
  ^2}{\epsilon}\ri)\re p_0(e^{t H_{\im p_0}}(X))dt + O(\delta^2 \epsilon) \\ &
   \geq \frac{\delta}{O(1)} \la \re p_0\ra_{\im p_0,T}(X)- O(\delta^2 \epsilon) \geq \frac{\delta \epsilon}{O(1)}.
\end{split}
\end{equation}
Here, the last estimate is obtained using Lemma \ref{lemma_avg} and by arguing as in the region $|X|^2 \leq \frac{\epsilon}{C}.$

To summarize the discussion so far, we have shown that there exist positive constants $C>1, \widetilde{C}>1, 0 <\epsilon_0\leq 1, 0<\delta_0 \leq 1,$ such that the weight function $G_\epsilon \in C^\infty(\R^{2n};\R)$ introduced in (\ref{eq:weight_fun_defn}) satisfies for all $0<\epsilon \leq \epsilon_0, 0 < \delta \leq \delta_0,$
\begin{equation}
        \re (\widetilde{p}_0(X+i \delta H_{G_\epsilon}(X))) \geq \frac{\delta |X|^2}{\widetilde{C}}, \quad |X| \leq \epsilon^{\frac12},
    \end{equation}
\begin{equation}
        \re (\widetilde{p}_0(X+i \delta H_{G_\epsilon}(X))) \geq \frac{\delta \epsilon}{\widetilde{C}}, \quad X \in \R^{2n}, \quad \epsilon^{\frac12} \leq |X| \leq  \frac{1}{C}.
\end{equation}

Lastly, we check that the elliptic estimate (\ref{eq:elliptic_exterior_final}) holds in the unbounded region $|X|^2 \geq \frac{1}{C},$ and it is here that we use the assumption (\ref{ass_dyn_2}).
Here, (\ref{eq:interp_bound}) and (\ref{eq:hol_taylor}) yield
\begin{equation}
    \re (\widetilde{p}_0(X+i\delta H_{G_\epsilon}(X))) \geq \delta \la (\re p_0)_\epsilon\ra_{\im p_0,T}(X)-O(\delta^2\epsilon).
\end{equation}
Note that in this region, we have
\begin{equation}
    (\re p_0)_\epsilon(e^{t H_{\im p_0}}X) \sim \epsilon \re p_0(e^{tH_{\im p_0}}X), \quad |X|^2 \geq \frac{1}{C},
\end{equation}
uniformly in $|t| \leq T$ and for $\epsilon>0$ small enough, so that for $\epsilon, \delta>0$ small enough, (\ref{ass_dyn_2}) implies
\begin{equation}
    \re \widetilde{p}_0(X+i\delta H_{G_\epsilon}(X)) \geq \frac{\delta \epsilon}{O(1)} \la \re p_0\ra_{\im p_0,T}-O(\delta^2 \epsilon) \geq \frac{\delta \epsilon}{O(1)}, \quad |X|^2 \geq \frac{1}{C}.
\end{equation}

We have now verified that with $G_\epsilon$ defined as in (\ref{eq:weight_fun_defn}), the estimates (\ref{eq:elliptic_local_final}) and (\ref{eq:elliptic_exterior_final}) hold. To conclude the proof of Proposition \ref{G_prop}, we let $\chi \in C_c^\infty(\R^{2n};[0,1])$ be such that $\chi = 1$ on a large compact set and such that $\nabla \chi$ is supported in the region where $p_0$ is elliptic, and set
\begin{equation}
    \widetilde{G}_\epsilon = \chi G_\epsilon \in C_c^\infty(\R^{2n};\R). 
\end{equation}
Then, the uniform bounds (\ref{eq:g_eps_control}) still hold for $\widetilde{G}_\epsilon,$ and the conclusion of Proposition \ref{G_prop} remains valid if $G_\epsilon$ is replaced with $\widetilde{G}_\epsilon,$ in view of (\ref{ass_elliptic}). Replacing $G_\epsilon$ by $\widetilde{G}_\epsilon,$ we may conclude the proof of Proposition \ref{G_prop}.

\section{FBI transforms and Bargmann spaces}\label{fbi_and_manifolds}
In this section, we recall the basic techniques associated to Bargmann spaces and FBI transforms (for background, see \cite[Chapter 12]{sjostrand_resonance_lectures}, \cite[Chapter 13]{Zworski2012}, \cite{almost_holomorphic_ref}) and introduce suitable I-Lagrangian submanifolds of the complexified phase space $\C^{2n},$ associated to the exponential weight $G_\epsilon$ introduced in Proposition \ref{G_prop}. Throughout the section, we let $x \in \C^n$ be a complex variable and write
\begin{equation}
   \partial_x := \frac12 (\partial_{\re x}-i \partial_{\im x})
\end{equation}
for the holomorphic derivative with respect to $x.$

Let $\phi=\phi(x,y): \C^n \times \C^n \to \C$ be a holomorphic quadratic form satisfying the conditions
\begin{equation}\label{eq:phase_fun_reqs}
    \im \partial_{yy}^2 \phi >0, \quad \det \partial^2_{xy} \phi \not =0.
\end{equation}
Associated to $\phi$ is the semiclassical Fourier-Bros-Iagolnitzer (FBI) transform 
\begin{equation}\label{eq:fbi_defn}
    (T_\phi u)(x):=c_{\phi,n} h^{-\frac{3n}{4}}\int_{\R^n} e^{\frac{i}{h} \phi(x,y)} u(y)dy,
\end{equation}
where $c_{\phi,n}>0$ is a normalization constant chosen so that the map
$T_\phi$ is unitary,
\begin{equation}
    T_\phi: L^2(\R^n) \to H_{\Phi}(\C^n).
\end{equation}
Here, the Bargmann space $H_\Phi(\C^n)$ is defined as 
\begin{equation}
    H_\Phi(\C^n) :=H(\C^n) \cap L^2_\Phi(\C^n),
\end{equation} 
where $L^2_\Phi(\C^n)$ is the exponentially weighted $L^2$--space
\begin{equation}
    L^2_{\Phi}(\C^n):= L^2(\C^n,e^{-\frac{2\Phi}{h}}d\mu), \quad \mu := \text{Lebesgue measure on }\C^n,
\end{equation}
and 
\begin{equation}\label{eq:phi_defn}
    \Phi(x):=\sup_{y \in \R^n} (-\im \phi(x,y)).
\end{equation}
We note that the quadratic form $\Phi$ is strictly plurisubharmonic (see \cite{almost_holomorphic_ref}).

Associated to the map $T_\phi$ is a complex-linear canonical transformation $\kappa_{T_\phi},$ implicitly given by
\begin{equation}\label{eq:canonical_defn}
    \kappa_{T_\phi}:(y,-\partial_y\phi(x,y)) \to (x,\partial_x \phi(x,y)), \quad x,y \in \C^n, 
\end{equation}
and an I-Lagrangian, R-symplectic linear subspace $\Lambda_{\Phi}$ defined by
\begin{equation}\label{eq:lambda_phi_defn}
\Lambda_{\Phi}:=\kappa_{T_\phi}(\R^{2n})=\li\{\li(x,\frac{2}{i}\partial_x \Phi(x)\ri): x \in \C^n\ri\} \subseteq \C^{2n}.
\end{equation}
Let $a \in S(1)=S(\R^{2n},1)$ and define $\widetilde{a} := a \circ \kappa_{T_\phi}^{-1}\in S(\Lambda_{\Phi},1).$ The FBI transform satisfies an exact version of Egorov's theorem, yielding
\begin{equation}\label{eq:egorov}
    T_\phi \circ a^w = \widetilde{a}^w \circ T_\phi,
\end{equation}
where $\widetilde{a}^w$ is the semiclassical Weyl quantization of $\widetilde{a}$ and is given by the contour integral
\begin{equation}\label{eq:contour_quant_defn}
    (\widetilde{a}^wu)(x):= \frac{1}{(2\pi h)^n} \iint_{\Gamma_0(x)} e^{\frac{i}{h} (x-y) \cdot \theta} \widetilde{a}\li(\frac{x+y}{2},\theta\ri) u(y)dy \wedge d\theta
\end{equation}
for the contour of integration 
\begin{equation}
    \Gamma_0(x):=\li\{\li(y,\frac{2}{i}\partial_x \Phi\li(\frac{x+y}{2}\ri)\ri): y \in \C^n\ri\}.
\end{equation}
The integral in (\ref{eq:contour_quant_defn}) converges absolutely for $u$ in a dense subset of $H_\Phi(\C^n),$ and a contour deformation argument shows that the operator $\widetilde{a}^w$ extends to a uniformly bounded map
\begin{equation}
    \widetilde{a}^w = O(1): H_\Phi(\C^n) \to H_\Phi(\C^n),
\end{equation}
see \cite[Proposition 1.2]{sjostrand_96}. We then have
\begin{equation}\label{eq:contour_deformed_quant_defn}
    (\widetilde{a}^wu)(x):= \frac{1}{(2\pi h)^n} \iint_{\Gamma_1(x)} e^{\frac{i}{h} (x-y) \cdot \theta} \widetilde{a}\li(\frac{x+y}{2},\theta\ri) u(y)dy \wedge d\theta + Ru(x)
\end{equation}
for
\begin{equation}\label{eq:deformed_contour}
    \Gamma_1(x):= \li\{\li(y,\frac{2}{i}\partial_x \Phi\li(\frac{x+y}{2}\ri)+ic (\overline{x-y})\ri): y \in \C^n\ri\}, \quad c>0,
\end{equation}
where $R=O(h^\infty): H_\Phi(\C^n) \to L^2_\Phi(\C^n).$ In (\ref{eq:contour_deformed_quant_defn}), we let $\widetilde{a} \in C^\infty(\C^{2n})$ also stand for an almost holomorphic extension of $\widetilde{a},$ bounded with all derivatives and supported in a tubular neighborhood of $\Lambda_\Phi.$

For the rest of the paper, we fix a standard FBI transform $T$ associated to the phase function $\phi(x,y) = \frac{i}{2}(x-y)^2,$ noting that the associated canonical transformation $\kappa_T,$ quadratic weight $\Phi=\Phi_0,$ and the corresponding manifold $\Lambda_{\Phi_0}$ are given by, respectively,
\begin{equation}\label{eq:fbi_standard}
\kappa_T(x,\xi) = (x-i\xi,\xi),\quad \Phi_0(x) = \frac12 |\im x|^2, \quad \Lambda_{\Phi_0}=\{(x,-\im x):x \in \C^n\}.
\end{equation}

Let $G_0,G_\epsilon$ be the weight functions defined in (\ref{eq:G_zero_defn}) and Proposition \ref{G_prop}, respectively. Following \cite{hitrik_starov_i}, we may associate to these weights the I-Lagrangian, R-symplectic manifolds
\begin{equation}
    \Lambda_{\delta,0}=\{X+i\delta H_{G_0}(X): X \in \R^{2n}\}\subseteq \C^{2n}, \quad \Lambda_{\delta,\epsilon}=\{X+i\delta H_{G_\epsilon}(X): X \in \R^{2n}\} \subseteq \C^{2n},
\end{equation}
for $0<\delta \leq \delta_0, 0<\epsilon \leq \epsilon_0.$ Here, we recall that the weight $G_0$ is quadratic and hence $\Lambda_{\delta,0}$ is real linear. The corresponding manifolds on the Bargmann side are then
\begin{equation}
\Lambda_{\Phi_{\delta,0}}:=\kappa_T(\Lambda_{G_0})=\li\{\li(x,\frac{2}{i} \partial_x \Phi_{\delta,0}\li(x\ri)\ri): x \in \C^n\ri\},
\end{equation}
\begin{equation}\label{eq:fbi_manifold_defn}
\Lambda_{\Phi_{\delta,\epsilon}}:=\kappa_T(\Lambda_{\delta,\epsilon})=\li\{\li(x,\frac{2}{i} \partial_x \Phi_{\delta,\epsilon}\li(x\ri)\ri): x \in \C^{n}\ri\},
\end{equation}
where
\begin{equation}\label{eq:phi_quad_defn}
    \Phi_{\delta,0}(x):=\text{vc}_{(y,\eta) \in \C^{n} \times \R^n} (-\im \phi(x,y)-(\im y) \cdot \eta +\delta G_0(\re y,\eta)),
\end{equation}
\begin{equation}\label{eq:phi_G_defn}
    \Phi_{\delta,\epsilon}(x):=\text{vc}_{(y,\eta) \in \C^{n} \times \R^n} (-\im \phi(x,y)-(\im y) \cdot \eta +\delta G_\epsilon(\re y,\eta)),
\end{equation}
see \cite[Section 3]{herau_sjostrand_stolk_kfp}, \cite[Proposition 2.1]{sjostrand2010}. Here we let ``vc'' stand for the critical value.

The main properties of the weight function $\Phi_{\delta,\epsilon}$ in (\ref{eq:phi_G_defn}) were established in \cite{hitrik_starov_i}, \cite[Section 3]{herau_sjostrand_stolk_kfp}, and following these works, we recall that $\Phi_{\delta,\epsilon}$ is a strictly plurisubharmonic function satisfying 
\begin{equation}\label{eq:phi_g_taylor}
    \Phi_{\delta,\epsilon}(x) = \Phi_0(x)+\delta G_\epsilon(\re x, - \im x)+O(\delta^2 \epsilon)
\end{equation}
uniformly on $\C^n.$ In particular, we get using (\ref{eq:G_der_bound}) and (\ref{eq:phi_g_taylor}) that
\begin{equation}\label{eq:phi_eps_zero_sim}
    \Phi_{\delta,\epsilon}-\Phi_0 = O(\epsilon)
\end{equation}
uniformly on $\C^n.$ Furthermore, $\Phi_{\delta,\epsilon}-\Phi_0$ is compactly supported and
\begin{equation}\label{eq:phi_g_der_bounds}
\nabla(\Phi_{\delta,\epsilon} -\Phi_0) = O(\delta\epsilon^{\frac12}), \quad \nabla^2\Phi_{\delta,\epsilon} \in L^\infty(\C^n),
\end{equation}
uniformly with respect to the parameters $\delta,\epsilon.$ Similarly, the function $\Phi_{\delta,0}$ in (\ref{eq:phi_quad_defn}) is a strictly plurisubharmonic quadratic form on $\C^n$ such that 
\begin{equation}
    \Phi_{\delta,0}(x)=\Phi_0(x)+\delta G_0(\re x, -\im x)+O(\delta^2 |x|^2).
\end{equation}

Throughout the rest of this section, we shall assume that $\epsilon = Ah,$ where $A \geq 1$ is a large constant. It follows from (\ref{eq:phi_eps_zero_sim}) that the norms in the exponentially weighted spaces $H_{\Phi_0}(\C^n)$ and $H_{\Phi_{\delta,\epsilon}}(\C^n)$ are equivalent, uniformly as $h \to 0^+,$ for each fixed $A.$ Carrying out an additional contour deformation in (\ref{eq:contour_deformed_quant_defn}), as in \cite{herau_sjostrand_stolk_kfp}, and using (\ref{eq:phi_g_taylor}), (\ref{eq:phi_g_der_bounds}), we obtain an operator
\begin{equation}\label{eq:a_deformed_1}
    \widetilde{a}^w: H_{\Phi_{\delta,\epsilon}}(\C^n) \to H_{\Phi_{\delta,\epsilon}}(\C^n),
\end{equation}
given by 
\begin{equation}\label{eq:a_deformed_2}
    (\widetilde{a}^wu)(x) := \frac{1}{(2\pi h)^n} \iint_{\widetilde{\
    \Gamma}_1(x)} e^{\frac{i}{h} (x-y) \cdot \theta} \widetilde{a}\li(\frac{x+y}{2},\theta\ri) u(y)dy \wedge d\theta +R_1 u(x),
\end{equation}
where 
\begin{equation}\label{eq:deformed_contour_defn}
    \widetilde{\Gamma}_1(x):= \li\{\li(y,\frac{2}{i}\partial_x \Phi_{\delta,\epsilon}\li(\frac{x+y}{2}\ri)+ic (\overline{x-y})\ri): y \in \C^n\ri\}, \quad c>0,
\end{equation}
and $R_1=O_A(h^\infty): H_{\Phi_{\delta,\epsilon}}(\C^n) \to L^2_{\Phi_{\delta,\epsilon}}(\C^n).$ Using that $\nabla^2 \Phi_{\delta,\epsilon} \in L^\infty(\C^n)$ uniformly together with Schur's lemma, we see that the realization
\begin{equation}            (\widetilde{a}^w_{\widetilde{\Gamma}_1}u)(x) := \frac{1}{(2\pi h)^n} \iint_{\widetilde{\
    \Gamma}_1(x)} e^{\frac{i}{h} (x-y) \cdot \theta} \widetilde{a}\li(\frac{x+y}{2},\theta\ri) u(y)dy \wedge d\theta
\end{equation}
satisfies
\begin{equation}
\widetilde{a}_{\widetilde{\Gamma}_1}^w=O(1): H_{\Phi_{\delta,\epsilon}}(\C^n) \to L^2_{\Phi_{\delta,\epsilon}}(\C^n),
\end{equation}
uniformly with respect to the parameters $\delta,\epsilon,$ provided that the constant $c>0$ in (\ref{eq:deformed_contour_defn}) is taken sufficiently large.

\section{\texorpdfstring{Local resolvent estimates away from the critical set and proof of Theorem \ref{thm0}}{Local resolvent estimates away from the critical set and proof of Theorem 1.1}}\label{ext_estimate}
Let $p=p(x,\xi;h) \in S(\R^{2n},1)$ be such that 
\begin{equation}\label{eq:ext_symbol}
    p(x,\xi;h) \sim \sum_{k=0}^\infty h^k p_k(x,\xi)
\end{equation}
in $S(\R^{2n},1),$ and assume that the leading symbol $p_0$ satisfies the assumptions \textup{(\ref{ass_pos}), (\ref{ass_elliptic}), (\ref{ass_morse}), (\ref{ass_dyn_1}),} and \textup{(\ref{ass_dyn_2})}, so that Proposition \ref{G_prop} holds. Replacing $p_0$ by $p_0-p_0(0),$ where $p_0(0) \in i \R,$ we may assume that $p_0(0)=0.$ Let us also define $\widetilde{p}=p \circ \kappa^{-1} \in S(\Lambda_{\Phi_0},1),$ where $\kappa$ and $\Lambda_{\Phi_0}$ are given in (\ref{eq:fbi_standard}). We then have 
\begin{equation}\label{eq:fbi_p_asymptotic}
    \widetilde{p} \sim \sum_{k=0}^\infty h^k \widetilde{p}_k, \quad \widetilde{p}_k:=p_k \circ \kappa^{-1}, \quad k \geq 0,
\end{equation}
in $S(\Lambda_{\Phi_0},1).$ In this section, we shall also use the notation $\widetilde{p}_k \in C^\infty(\C^{2n})$ to denote an almost holomorphic extension of $\widetilde{p}_k \in S(\Lambda_{\Phi_0},1),$ bounded with all derivatives and supported in a tubular neighborhood of $\Lambda_{\Phi_0}.$

Let us recall the strictly plurisubharmonic weight function $\Phi_{\delta,\epsilon} \in C^\infty(\C^n)$ introduced in (\ref{eq:phi_G_defn}). Here, $\delta>0$ is sufficiently small and fixed and in what follows, we take $\epsilon=Ah,$ where $A>0$ is a sufficiently large constant to be chosen. It follows from (\ref{eq:a_deformed_1}), (\ref{eq:a_deformed_2}), and (\ref{eq:deformed_contour_defn}) that we may neglect the error term $R_1=O_A(h^\infty): H_{\Phi_{\delta,\epsilon}}(\C^n) \to L^2_{\Phi_{\delta,\epsilon}}(\C^n)$ and realize the operator
\begin{equation}
\widetilde{p}^w=\widetilde{p}^w(x,hD;h):H_{\Phi_{\delta,\epsilon}}(\C^n) \to L^2_{\Phi_{\delta,\epsilon}}(\C^n)
\end{equation}
using the contour (\ref{eq:deformed_contour_defn}). Recalling also the I-Lagrangian, R-symplectic manifold $\Lambda_{\Phi_{\delta,\epsilon}}$ defined in (\ref{eq:fbi_manifold_defn}), we obtain from (\ref{eq:elliptic_exterior_final}) and (\ref{eq:fbi_p_asymptotic}) that
\begin{equation}\label{eq:weak_elliptic_bound}
    \re \widetilde{p}_0\li(x,\frac2i \partial_x \Phi_{\delta,\epsilon}(x)\ri) \geq \frac{\delta \epsilon}{C}, \quad |x| \geq \epsilon^{\frac12}.
\end{equation}

In order to exploit the weakly elliptic bound (\ref{eq:weak_elliptic_bound}) when deriving estimates in the exterior region $|x| \geq \epsilon^{\frac12},$ following \cite{herau_sjostrand_stolk_kfp}, \cite{HitrikPravdaStarov2009}, we shall use a rescaling argument and set
 \begin{equation}\label{eq:change_of_vars_exterior}
    x:=\epsilon^{\frac12} \widetilde{x}.
 \end{equation}
Let us also set 
\begin{equation}\label{eq:h_exterior_rescaling}
    \widetilde{h}:=\frac{h}{\epsilon}=\frac{1}{A}
\end{equation}
and
\begin{equation}\label{eq:phi_tilde_epsilon_defn}
     \quad \widetilde{\Phi}_{\delta,\epsilon}(\widetilde{x}):=\frac{1}{\epsilon}\Phi_{\delta,\epsilon}(\epsilon^\frac12\widetilde{x}),
\end{equation}
and let us observe that $\nabla^2 \widetilde{\Phi}_{\delta,\epsilon} \in L^\infty(\C^n)$ uniformly with respect to $0<\delta \leq \delta_0, 0<\epsilon \leq \epsilon_0$ in view of (\ref{eq:phi_g_der_bounds}).
Associated to the change of variables (\ref{eq:change_of_vars_exterior})
is the rescaling map
\begin{equation}
     \quad Uu(\widetilde{x}) = \epsilon^{\frac{n}{2}} u(\epsilon^{\frac12}\widetilde{x}),
\end{equation}
which is unitary, 
\begin{equation}
    U: H_{\Phi_{\delta,\epsilon}}(\C^n) = H_{\Phi_{\delta,\epsilon},h}(\C^n) \to H_{\widetilde{\Phi}_{\delta,\epsilon},\widetilde{h}}(\C^n).
\end{equation}

Realizing the operator $\widetilde{p}_0^w(x,hD): H_{\Phi_{\delta,\epsilon}}(\C^n) \to L^2_{\Phi_{\delta,\epsilon}}(\C^n)$ using the contour in (\ref{eq:deformed_contour_defn}), we make the change of variables $x=\epsilon^{\frac12}\widetilde{x}, y=\epsilon^{\frac12} \widetilde{y}, \theta = \epsilon^{\frac12}\widetilde{\theta}$ to obtain
\begin{equation}\label{p_tilde_quant_defn}
    \begin{split}
        \frac{1}{\epsilon}\widetilde{p}_0^w(x,hD) u(x) & = \frac{1}{(2\pi h)^n} \iint_{\Gamma(x)} e^{\frac{i}{h} (x-y) \cdot \theta}\li(\frac1\epsilon\widetilde{p}_0\li(\frac{x+y}{2},\theta\ri)\ri) u(y)dy \wedge d\theta \\
        & = \frac{1}{(2\pi (\frac{h}{\epsilon}))^n} \iint_{\widetilde{\Gamma}(\widetilde{x})} e^{\frac{i\epsilon}{h}(\widetilde{x}-\widetilde{y}) \cdot \widetilde{\theta}} \li(\frac1\epsilon\widetilde{p}_0\li(\epsilon^{\frac12}\li(\frac{\widetilde{x}+\widetilde{y}}{2}, \widetilde{\theta}\ri)\ri)\ri) \widetilde{u}(\widetilde{y})d\widetilde{y}\wedge d\widetilde{\theta} \\
        & = \frac{1}{(2\pi \widetilde{h})^n} \iint_{\widetilde{\Gamma}(\widetilde{x})} e^{\frac{i}{\widetilde{h}}(\widetilde{x}-\widetilde{y}) \cdot \widetilde{\theta}} \widetilde{p}_{0,\epsilon}\li(\frac{\widetilde{x}+\widetilde{y}}{2}, \widetilde{\theta}\ri) \widetilde{u}(\widetilde{y})d\widetilde{y}\wedge d\widetilde{\theta} \\
    \end{split}
\end{equation}
for $\widetilde{u}(\widetilde{x})=u(\epsilon^{\frac12} \widetilde{x})$ and
\begin{equation}\label{eq:rescaled_symbol}
    \widetilde{p}_{0,\epsilon}(\widetilde{x},\widetilde{\xi}):= \frac{\widetilde{p}_0(\epsilon^\frac12 (\widetilde{x},\widetilde{\xi}))} {\epsilon}, \quad (\widetilde{x},\widetilde{\xi}) \in \C^{2n},
\end{equation}
given the contours
\begin{equation}
    \begin{split}
        \Gamma(x) & :=\li\{\li(y,\frac{2}{i}\partial_x \Phi_{\delta,\epsilon}\li(\frac{x+y}{2}\ri)+ic\overline{(x-y)}\ri): y \in \C^n\ri\}, \\
        \widetilde{\Gamma}(\widetilde{x})&:=\li\{\li(\widetilde{y},\frac{2}{i}\partial_{\widetilde{x}} \widetilde{\Phi}_{\delta,\epsilon}\li(\frac{\widetilde{x}+\widetilde{y}}{2}\ri)+ic\overline{(\widetilde{x}-\widetilde{y})}\ri): \widetilde{y} \in \C^n\ri\}.
    \end{split}
\end{equation}
It follows that 
\begin{equation}\label{eq:p_fbi_side_rescaled}
    U\li(\frac{\widetilde{p}^w_0(x,hD)}{\epsilon}\ri)U^{-1} = \widetilde{p}^w_{0,\epsilon}(\widetilde{x},\widetilde{h}D).
\end{equation}
The rescaled symbol $\widetilde{p}_{0,\epsilon}$ satisfies
\begin{equation}
 \partial^\alpha\widetilde{p}_{0,\epsilon} \in L^\infty(\C^{2n}), \quad |\alpha| \geq 2,
\end{equation}
uniformly in $0<\epsilon \leq \epsilon_0,$ and since $\widetilde{p}_0$ vanishes to the second order at $(x,\xi) =(0,0),$ we conclude that \begin{equation}
    \widetilde{p}_{0,\epsilon}(\widetilde{x},\widetilde{\xi}) = O(1) \la (\widetilde{x},\widetilde{\xi})\ra^2, \quad (\widetilde{x},\widetilde{\xi}) \in \C^{2n},
\end{equation}
uniformly in $\epsilon.$ In view of (\ref{eq:weak_elliptic_bound}) and (\ref{eq:phi_tilde_epsilon_defn}), we therefore have the elliptic bound
\begin{equation}\label{eq:elliptic_rescaled}
  \re \widetilde{p}_{0,\epsilon}\li(\widetilde{x},\frac2i \partial_{\widetilde{x}}\widetilde{\Phi}_{\delta,\epsilon} (\widetilde{x})\ri) \geq \frac{\delta}{C}, \quad |\widetilde{x}| \geq 1, \quad \widetilde{x} \in \C^n.
\end{equation}

The discussion above shows that we are in the position to apply the following quantization-\\multiplication formula to the $\widetilde{h}$--pseudodifferential operator $\widetilde{p}^w_{0,\epsilon}=\widetilde{p}^w_{0,\epsilon}(\widetilde{x},\widetilde{h}D),$ established in \\ \cite[Proposition 4]{hitrik_starov_i}:
\begin{prop}\label{prop_fbi}
    Let $\psi \in C^\infty(\C^n)$ be such that $\nabla \psi \in C_c^\infty(\C^n).$ Then, we have
    \begin{equation}\label{eq:prop_fbi}
        \langle \psi \widetilde{p}^w_{0,\epsilon} u,v \rangle_{\widetilde{\Phi}_{\delta,\epsilon},\widetilde{h}}= \int_{\C^n} \psi(\widetilde{x}) \widetilde{p}_{0,\epsilon}\li(\widetilde{x},\frac2i \partial_{\widetilde{x}}\widetilde{\Phi}_{\delta,\epsilon}(\widetilde{x})\ri)u(\widetilde{x})\overline{v(\widetilde{x})} e^{-\frac{2\widetilde{\Phi}_{\delta,\epsilon}(\widetilde{x})}{\widetilde{h}}} d\mu(\widetilde{x}) + O(\widetilde{h})\|u\|_{\widetilde{\Phi}_{\delta,\epsilon},\widetilde{h}}\|v\|_{\widetilde{\Phi}_{\delta,\epsilon},\widetilde{h}}
    \end{equation}
    for all $u,v \in H_{\widetilde{\Phi}_{\delta,\epsilon},\widetilde{h}}(\C^n).$
\end{prop}

Now, let $\chi \in C^\infty(\C^n;[0,1])$ be such that $\text{supp }\chi \subseteq \{\widetilde{x} \in \C^n: |\widetilde{x}|\geq 1\}$ and $\chi=1$ in a neighborhood of infinity. Applying Proposition \ref{prop_fbi} with $\chi^2$, we get 
\begin{equation}\label{eq:quant_mult_ext_apply}
\begin{split}
     \la \chi^2 (\widetilde{p}_{0,\epsilon}^w-\widetilde{h}z)\widetilde{u},\widetilde{u} \ra_{\widetilde{\Phi}_{\delta,\epsilon},\widetilde{h}} = \int_{\C^n} \chi^2(\widetilde{x}) \widetilde{p}_{0,\epsilon}\li(\widetilde{x},\frac2i\partial_{\widetilde{x}}\widetilde{\Phi}_{\delta,\epsilon}(\widetilde{x})\ri) & |\widetilde{u}(\widetilde{x})|^2 e^{-\frac{2\widetilde{\Phi}_{\delta,\epsilon}(\widetilde{x})}{\widetilde{h}}} d\mu(\widetilde{x}) \\ &- \widetilde{h}z \|\chi \widetilde{u}\|^2_{\widetilde{\Phi}_{\delta,\epsilon},\widetilde{h}}+ O(\widetilde{h})\|\widetilde{u}\|_{\widetilde{\Phi}_{\delta,\epsilon},\widetilde{h}}^2. 
\end{split}
\end{equation}
Assuming that the spectral parameter $z \in \C$ is such that $\re z \leq B$ for some fixed $B>0,$ we take real parts in (\ref{eq:quant_mult_ext_apply}) and use (\ref{eq:elliptic_rescaled}) to get
\begin{equation}
  \int_{\C^n} \chi^2(\widetilde{x}) |\widetilde{u}(\widetilde{x})|^2 e^{-\frac{2\widetilde{\Phi}_{\delta,\epsilon}(\widetilde{x})}{\widetilde{h}}} d\mu(\widetilde{x}) \leq   O(1)\re\la \chi^2 (\widetilde{p}_{0,\epsilon}^w-\widetilde{h}z)\widetilde{u},\widetilde{u} \ra_{\widetilde{\Phi}_{\delta,\epsilon},\widetilde{h}}  +O_B(\widetilde{h})\|\widetilde{u}\|_{\widetilde{\Phi}_{\delta,\epsilon},\widetilde{h}}^2.
\end{equation}
Using the Cauchy-Schwarz inequality, we thus obtain
\begin{equation}\label{eq:exterior_after_cauchy_schwarz}
     \int_{\C^n} \chi^2(\widetilde{x}) |\widetilde{u}(\widetilde{x})|^2 e^{-\frac{2\widetilde{\Phi}_{\delta,\epsilon}(\widetilde{x})}{\widetilde{h}}} d\mu(\widetilde{x}) \leq O(1)\|(\widetilde{p}^w_{0,\epsilon}-\widetilde{h}z)\widetilde{u}\|_{\widetilde{\Phi}_{\delta,\epsilon},\widetilde{h}}\|\widetilde{u}\|_{\widetilde{\Phi}_{\delta,\epsilon},\widetilde{h}}  + O_B(\widetilde{h})\|\widetilde{u}\|_{\widetilde{\Phi}_{\delta,\epsilon},\widetilde{h}}^2.
\end{equation}
Writing $\widetilde{u}=Uu$ for $u \in H_{\Phi_{\delta,\epsilon}}(\C^n),$ we get in view of (\ref{eq:change_of_vars_exterior}), (\ref{eq:p_fbi_side_rescaled}), (\ref{eq:exterior_after_cauchy_schwarz}) that
\begin{equation}\label{eq:re_elliptic_exterior_final}
    \epsilon \int_{\C^n} \chi^2\li(\frac{x}{\sqrt{\epsilon}}\ri)|u(x)|^2 e^{-\frac{2\Phi_{\delta,\epsilon}(x)}{h}}d\mu(x) \leq O(1) \|(\widetilde{p}_0^w(x,hD)-hz)u\|_{\Phi_{\delta,\epsilon}} \|u\|_{\Phi_{\delta,\epsilon}} + O_B(h) \|u\|_{\Phi_{\delta,\epsilon}}^2
\end{equation}
for all $u \in H_{\Phi_{\delta,\epsilon}}(\C^n).$ The realization of the operator
\begin{equation}
    \widetilde{p}^w(x,hD;h)-\widetilde{p}_0^w(x,hD): H_{\Phi_{\delta,\epsilon}}(\C^n) \to L^2_{\Phi_{\delta,\epsilon}}(\C^n)
\end{equation}
using the contour (\ref{eq:deformed_contour_defn}) satisfies
\begin{equation}\label{eq:p_p0_similar}
    \widetilde{p}^w(x,hD;h)-\widetilde{p}_0^w(x,hD)=O(h): H_{\Phi_{\delta,\epsilon}}(\C^n) \to L^2_{\Phi_{\delta,\epsilon}}(\C^n)
\end{equation}
uniformly with respect to $\epsilon.$ Consequently, using (\ref{eq:re_elliptic_exterior_final}), (\ref{eq:p_p0_similar}), and reintroducing the negligible error term in (\ref{eq:a_deformed_2}), we get that
\begin{equation}\label{eq:exterior_final}
    \epsilon \int_{\C^n} \chi^2\li(\frac{x}{\sqrt{\epsilon}}\ri)|u(x)|^2 e^{-\frac{2\Phi_{\delta,\epsilon}(x)}{h}}d\mu(x) \leq O(1) \|(\widetilde{p}^w-hz)u\|_{\Phi_{\delta,\epsilon}} \|u\|_{\Phi_{\delta,\epsilon}} + (O_B(h)+O_A(h^\infty)) \|u\|_{\Phi_{\delta,\epsilon}}^2.
\end{equation}
The discussion above may be summarized in the following result.

\begin{prop}\label{prop_ext}
    Let $p(x,\xi;h) \sim p_0(x,\xi) + hp_1(x,\xi)+\cdots$ in $S(\R^{2n},1),$ and assume that $p_0$ satisfies \textup{(\ref{ass_pos}), (\ref{ass_elliptic}), (\ref{ass_morse}), (\ref{ass_dyn_1}),} and \textup{(\ref{ass_dyn_2}).} We let 
    \begin{equation}
        \widetilde{p}^w = T \circ p^w(x,hD;h) \circ T^{-1},
    \end{equation}
    where $T$ is a semiclassical metaplectic FBI-Bargmann transform given in \textup{(\ref{eq:fbi_defn}),} with $\phi(x,y) = \frac{i}{2}(x-y)^2.$ Let $\epsilon=Ah$ for $A \geq 1,$ let $\Phi_{\delta,\epsilon} \in C^\infty(\C^n)$ be defined in \textup{(\ref{eq:phi_G_defn}),} and let $B>0.$ Let also $\chi \in C^\infty(\C^n;[0,1])$ be such that $\text{supp }\chi \subseteq \{\widetilde{x} \in \C^n:|\widetilde{x}| \geq 1\}$ and $\chi = 1$ in a neighborhood of infinity. Then, we have that for all $u \in H_{\Phi_{\delta,\epsilon}}(\C^n),$ all $z \in \C$ such that $\re z \leq B,$ and all $0<h \leq h_0,$
    \begin{equation}\label{eq:exterior_final_2}
    \begin{split}
    \epsilon \int_{\C^n} \chi^2\li(\frac{x}{\sqrt{\epsilon}}\ri)&|u(x)|^2 e^{-\frac{2\Phi_{\delta,\epsilon}(x)}{h}}d\mu(x) \\ & \leq O(1) \|(\widetilde{p}^w-p_0(0)-hz)u\|_{\Phi_{\delta,\epsilon}} \|u\|_{\Phi_{\delta,\epsilon}} + (O_B(h)+O_A(h^\infty)) \|u\|_{\Phi_{\delta,\epsilon}}^2.
    \end{split}
\end{equation}
\end{prop} 

With Proposition \ref{prop_ext} as the starting point, we shall now turn our attention to the proof of Theorem \ref{thm0}. When doing so, we may assume that $p_0(0)=0.$ Taking the imaginary part in (\ref{eq:pre_taylor}) and using (\ref{ass_quad}), (\ref{eq:double_derivative_estimate}), (\ref{eq:G_der_bound}) gives 
\begin{equation}\label{eq:im_interior_estimate}
    \im \widetilde{p}_0(X+i\delta H_{G_\epsilon}(X))  = O(\epsilon), \quad |X| \leq O(1)\epsilon^{\frac12}, \quad X \in \R^{2n}.\\
\end{equation}
Similarly to (\ref{eq:weak_elliptic_bound}), we conclude therefore that 
\begin{equation}\label{eq:imp0_size}
   \im \widetilde{p}_0\li(x,\frac2i \partial_x \Phi_{\delta,\epsilon}(x)\ri) = O(\epsilon), \quad |x| \leq 2 \epsilon^{\frac12}. 
\end{equation}

Let $\psi \in C^\infty_c(\C^n;[0,1])$ be such that $\text{supp }\psi \subseteq \{\widetilde{x}\in \C^n: |\widetilde{x}|<2\}, \psi(\widetilde{x})=1$ for $|\widetilde{x}| \leq 1$ and $\psi^2+\chi^2=1$ on $\C^n.$ Applying Proposition \ref{prop_fbi} with $\psi^2$ and taking the imaginary part, we get that for $z \in \C$ and $\widetilde{u} \in H_{\widetilde{\Phi}_{\delta,\epsilon},\widetilde{h}}(\C^n),$ one has 
\begin{equation}\label{eq:im_interior_elliptic_estimate_1}
\begin{split}
    \im\la \psi^2 &(\widetilde{p}_{0,\epsilon}^w-\widetilde{h}z)\widetilde{u},\widetilde{u} \ra_{\widetilde{\Phi}_{\delta,\epsilon},\widetilde{h}} \\ & =\int_{\C^n} \psi^2(\widetilde{x}) \li(\im\widetilde{p}_{0,\epsilon}\li(\widetilde{x},\frac2i \partial_{\widetilde{x}}\widetilde{\Phi}_{\delta,\epsilon}(\widetilde{x})\ri)-\widetilde{h}\im z\ri) |\widetilde{u}(\widetilde{x})|^2 e^{-\frac{2\widetilde{\Phi}_{\delta,\epsilon}(\widetilde{x})}{\widetilde{h}}} d\mu(\widetilde{x}) + O(\widetilde{h})\|\widetilde{u}\|_{\widetilde{\Phi}_{\delta,\epsilon},\widetilde{h}}^2.
\end{split}
\end{equation}
Here we have 
\begin{equation}\label{eq:imp0eps_size}
   \im\widetilde{p}_{0,\epsilon}\li(\widetilde{x},\frac2i \partial_{\widetilde{x}}\widetilde{\Phi}_{\delta,\epsilon}(\widetilde{x})\ri) = O(1)
\end{equation}
on the support of $\psi$ in view of (\ref{eq:phi_tilde_epsilon_defn}), (\ref{eq:rescaled_symbol}), and (\ref{eq:imp0_size}). Assuming that 
\begin{equation}\label{eq:im_bound}
    |\im z|>D,
\end{equation}
where $D>0$ is sufficiently large depending on $A,$ we obtain therefore using (\ref{eq:im_interior_elliptic_estimate_1}), (\ref{eq:imp0eps_size}), and the Cauchy-Schwarz inequality that

\begin{equation}\label{eq:after_cauchy}
    \int_{\C^n} \psi^2(\widetilde{x}) |\widetilde{u}(\widetilde{x})|^2 e^{-\frac{2\widetilde{\Phi}_{\delta,\epsilon}(\widetilde{x})}{\widetilde{h}}}d\mu(\widetilde{x}) \leq O(1)\|(\widetilde{p}_{0,\epsilon}^w-\widetilde{h}z)\widetilde{u}\|_{\widetilde{\Phi}_{\delta,\epsilon},\widetilde{h}}\|\widetilde{u}\|_{\widetilde{\Phi}_{\delta,\epsilon},\widetilde{h}}  + O(\widetilde{h})\|\widetilde{u}\|_{\widetilde{\Phi}_{\delta,\epsilon},\widetilde{h}}^2,
\end{equation}
where we treat the cases $\im z>D$ and $\im z<-D$ separately when establishing (\ref{eq:after_cauchy}). Similarly to (\ref{eq:re_elliptic_exterior_final}), we infer from (\ref{eq:after_cauchy}) that
\begin{equation}
    \epsilon\int_{\C^n} \psi^2\li(\frac{x}{\sqrt{\epsilon}}\ri) |u(x)|^2 e^{-\frac{2\Phi_{\delta,\epsilon}(x)}{h}}d\mu(x) \leq O(1)\|(\widetilde{p}_{0}^w(x,hD)-hz)u\|_{\Phi_{\delta,\epsilon}}\|u\|_{\Phi_{\delta,\epsilon}}  + O(h)\|u\|_{\Phi_{\delta,\epsilon}}^2
\end{equation}
for all $u \in H_{\Phi_{\delta,\epsilon}}(\C^n)$ and all $z \in \C$ satisfying (\ref{eq:im_bound}). Next, arguing as in (\ref{eq:p_p0_similar}) and (\ref{eq:exterior_final}), we get
\begin{equation}\label{eq:im_elliptic_interior_final}
\begin{split}
    \epsilon \int_{\C^n} \psi^2\li(\frac{x}{\sqrt{\epsilon}}\ri)&|u(x)|^2 e^{-\frac{2\Phi_{\delta,\epsilon}(x)}{h}}d\mu(x) \\ & \leq O(1) \|(\widetilde{p}^w(x,hD;h)-hz)u\|_{\Phi_{\delta,\epsilon}} \|u\|_{\Phi_{\delta,\epsilon}} + (O(h)+O_A(h^\infty)) \|u\|_{\Phi_{\delta,\epsilon}}^2.
\end{split}
\end{equation}
Combining (\ref{eq:exterior_final}) and (\ref{eq:im_elliptic_interior_final}), we obtain that for all $z \in \C$ such that $\re z \leq B$ and $|\im z| >D,$ for some $D$ large enough depending on $A \geq 1,$ we have 
\begin{equation}
    \epsilon \int_{\C^n} |u(x)|^2 e^{-\frac{2\Phi_{\delta,\epsilon}(x)}{h}}d\mu(x) \leq O(1) \|(\widetilde{p}^w(x,hD;h)-hz)u\|_{\Phi_{\delta,\epsilon}}\|u\|_{\Phi_{\delta,\epsilon}}+(O_B(h)+O_A(h^\infty))\|u\|_{\Phi_{\delta,\epsilon}}^2.
\end{equation}
Here, we recall that $\epsilon=Ah.$ Thus,
\begin{equation}\label{eq:final_thm0_Ah}
\begin{split}
    h \int_{\C^n} |u(x)|^2 & e^{-\frac{2\Phi_{\delta,\epsilon}(x)}{h}}d\mu(x) \\ & \leq O(1) \|(\widetilde{p}^w(x,hD;h)-hz)u\|_{\Phi_{\delta,\epsilon}} \|u\|_{\Phi_{\delta,\epsilon}} +\li(O_B\li(\frac{h}{A}\ri)+O_A(h^\infty)\ri) \|u\|_{\Phi_{\delta,\epsilon}}^2,
\end{split}
\end{equation}
and writing
\begin{equation}\label{eq:final_thm0_am_gm}
    O(1)\|(\widetilde{p}^w(x,hD;h)-hz)u\|_{\Phi_{\delta,\epsilon}}\|u\|_{\Phi_{\delta,\epsilon}} \leq \frac{O(1)}{h}\|(\widetilde{p}^w(x,hD;h)-hz)u\|^2_{\Phi_{\delta,\epsilon}}+\frac{h}{2}\|u\|_{\Phi_{\delta,\epsilon}}^2,
\end{equation}
we obtain using (\ref{eq:final_thm0_Ah}), (\ref{eq:final_thm0_am_gm}),
\begin{equation}\label{eq:final_thm0_final}
     h \int_{\C^n} |u(x)|^2 e^{-\frac{2\Phi_{\delta,\epsilon}(x)}{h}}d\mu(x) \leq \frac{O(1)}{h}\|(\widetilde{p}^w(x,hD;h)-hz)u\|^2_{\Phi_{\delta,\epsilon}} +\li(O_B\li(\frac{h}{A}\ri)+O_A(h^\infty)\ri) \|u\|_{\Phi_{\delta,\epsilon}}^2.
\end{equation}
For each $B>0,$ we may choose $A \geq 1$ large enough and fixed so that the second term on the right hand side of (\ref{eq:final_thm0_final}) can be absorbed into the left hand side for all $h>0$ small enough. We therefore get that for each $B>0,$ there exists $D>0$ such that for all $h>0$ small enough, all $u \in H_{\Phi_{\delta,\epsilon}}(\C^n),$ and all $z \in \C$ such that $\re z<B, |\im z|>D,$ we have
\begin{equation}\label{eq:full_exterior_final}
    h \|u\|_{\Phi_{\delta,\epsilon}}\leq O(1) \|(\widetilde{p}^w-hz)u\|_{\Phi_{\delta,\epsilon}}.
\end{equation}
Recalling (\ref{eq:phi_eps_zero_sim}), we obtain that 
\begin{equation}\label{eq:thm0_final_resolvent}
   h \|u\|_{\Phi_0} \leq O(1)\|(\widetilde{p}^w-hz)u\|_{\Phi_0}.
\end{equation}
Using (\ref{eq:thm0_final_resolvent}), the unitarity of the FBI transform $T:L^2(\R^n) \to H_{\Phi_0}(\C^n),$ and (\ref{eq:egorov}), we obtain the desired resolvent estimate
\begin{equation}
   h\|u\|_{L^2} \leq O(1)\|(p^w(x,hD;h)-hz)u\|_{L^2},
\end{equation}
for all $u \in L^2(\R^n)$ and all $z \in \C$ such that $\re z<B,|\im z|>D.$ It follows that the operator
\begin{equation}
    p^w(x,hD;h)-hz: L^2(\R^n) \to L^2(\R^n)
\end{equation}
is injective with closed range. Recalling (\ref{ass_elliptic}), we observe that it is also Fredholm of index zero for all $h>0$ small enough depending on $B.$ We conclude that 
\begin{equation}
    p^w(x,hD;h)-hz:L^2(\R^n) \to L^2(\R^n)
\end{equation}
is invertible for all $h>0$ small enough and all such $z,$ with the inverse satisfying 
\begin{equation}
    \|(p^w(x,hD;h)-hz)^{-1}\|_{\mathcal{L}(L^2(\R^n)),L^2(\R^n))} \leq O_B\li(\frac{1}{h}\ri).
\end{equation}
This completes the proof of Theorem \ref{thm0}.
\section{A priori estimates for elliptic quadratic operators}\label{local_estimate}
In this section, as a preparation for the proof of Theorem \ref{thm1}, we shall discuss a priori estimates for semiclassical differential operators with elliptic quadratic symbols acting on Bargmann spaces. Let $\Phi$ be a strictly plurisubharmonic quadratic form on $\C^n,$ and let us set, similarly to (\ref{eq:lambda_phi_defn}),
\begin{equation}
    \Lambda_\Phi =\li\{\li(x,\frac2i \partial_x \Phi(x)\ri): x \in \C^n\ri\} \subseteq \C^{2n}.
\end{equation}
Let $\widetilde{q}:\C^{2n} \to \C$ be a holomorphic quadratic form satisfying 
\begin{equation}\label{eq:ass_quad}
        \re \widetilde{q}|_{\Lambda_{\Phi}}>0
\end{equation}
in the sense of quadratic forms. Associated to $\widetilde{q}$ is the semiclassical Weyl quantization $\widetilde{q}^w(x,hD)$ acting on $H_\Phi(\C^n)$ as an unbounded operator, given by 
\begin{equation}
    (\widetilde{q}^w(x,hD)u)(x):= \frac{1}{(2\pi h)^n} \iint_{\Gamma(x)} e^{\frac{i}{h} (x-y) \cdot \theta} \widetilde{q}\li(\frac{x+y}{2},\theta\ri) u(y)dy \wedge d\theta,
\end{equation}
with
\begin{equation}
    \Gamma(x):= \li\{\li(y,\frac{2}{i}\partial_x \Phi\li(\frac{x+y}{2}\ri)+ic (\overline{x-y})\ri): y \in \C^n\ri\}, \quad c>0.
\end{equation}
When equipped with its maximal domain, $\widetilde{q}^w(x,hD)$ becomes a closed densely defined operator on $H_\Phi(\C^n)$ with discrete spectrum (see \cite[Section 5]{herau_sjostrand_stolk_kfp}).

Our starting point is the following result, which has been established in \cite[Proposition 5.1]{herau_sjostrand_stolk_kfp}. Since the argument in \cite{herau_sjostrand_stolk_kfp} is quite brief, here we shall take an opportunity to provide some additional details. 
\begin{prop}\label{quad_bound}
    Let $K \subseteq \C$ be a compact set disjoint from the spectrum of $\widetilde{q}^w(x,D)$ on \\ $H_{\Phi,h=1}(\C^n).$ Then, there exists $C>0$ such that the estimate
    \begin{equation}\label{eq:res_2}
        \|(h+|x|^2)^{\frac12}u\|_{L^2_\Phi(\C^n)} \leq C \|(h+|x|^2)^{-\frac12}(\widetilde{q}^w(x,hD)-h\lambda)u\|_{L^2_\Phi(\C^n)}
    \end{equation}
    holds for all $0<h \leq 1,$ all $\lambda \in K,$ and all $u \in H(\C^n)$ such that $u=O_{k,h}(1)\la x\ra^{-k} e^{\frac{\Phi(x)}{h}}$ in $L^2(\C^n)$ for all $k \geq 0.$
\end{prop}
\begin{proof} 
We begin the proof by noting that there exists a holomorphic quadratic form $\phi=\phi(x,y)$ on $\C^{2n},$ satisfying (\ref{eq:phase_fun_reqs}), such that the corresponding FBI transform $T$ in (\ref{eq:fbi_defn}) is a unitary map $T:L^2(\R^n) \to H_\Phi(\C^n),$ and such that the associated complex linear canonical transformation $\kappa_T$ in (\ref{eq:canonical_defn}) satisfies 
\begin{equation}
    \kappa_T(\R^{2n}) = \Lambda_\Phi,
\end{equation}
(see \cite[Section 13.3]{Zworski2012}). 

The quadratic form $q:= \widetilde{q} \circ \kappa_T$ satisfies $\re q>0$ on $\R^{2n}$ by virtue of (\ref{eq:ass_quad}). The maximal domain
    \begin{equation}
        \mathcal{D}(q^w(x,D)) = \{u \in L^2(\R^n): q^w(x,D)u \in L^2(\R^n)\}
    \end{equation}
    of the quadratic differential operator $q^w(x,D)$ with $h=1$ is then
    \begin{equation}
        \mathcal{D}(q^w(x,D)) = \{u \in L^2(\R^n): (x^2+D^2)u \in L^2(\R^n)\}
    \end{equation}
    in view of the ellipticity of $q.$ Using the functional calculus for the self-adjoint  operator $x^2+D^2$ on $L^2(\R^n),$ let us set
    \begin{equation}
        \Lambda = \la (x,D)\ra =(1+x^2+D^2)^{\frac12}.
    \end{equation}
    Here, we recall from \cite[Theorem 1.11.1]{AST_1984__112__R1_0} that
    \begin{equation}\label{eq:lambda_pseudo}
        \Lambda^r \in \text{Op}^w(S(\la X\ra^r)), \quad r \in \R.
    \end{equation}
    We let $\mathcal{D}(\Lambda)=\Lambda^{-1}(L^2(\R^n))$ be the domain of the unbounded self-adjoint operator $\Lambda,$ equipped with the graph norm, and also introduce $\mathcal{D}(\Lambda^{-1})=\Lambda(L^2(\R^n)) \subseteq \s'(\R^n),$ the dual space of $\mathcal{D}(\Lambda),$ with duality defined via the $L^2$ scalar product.

    The ellipticity property $\re q>0$ implies that for each $\lambda \in \C,$ the operator
    \begin{equation}
        q^w(x,D)-\lambda:\mathcal{D}(\Lambda) \to \mathcal{D}(\Lambda^{-1})
    \end{equation}
    is a Fredholm operator of index $0,$ which is invertible for $\lambda \not \in \text{Spec}_{L^2(\R^n)}(q^w(x,D)).$ We conclude that there exists $C=C_K>0$ such that 
    \begin{equation}
        \|(q^w(x,D)-\lambda)^{-1}\|_{\mathcal{L}(D(\Lambda^{-1}),D(\Lambda))} \leq C, \quad \lambda \in K,
    \end{equation}
    and therefore
    \begin{equation}
        \|v\|_{\mathcal{D}(\Lambda)} \leq C \|(q^w(x,D)-\lambda)v\|_{\mathcal{D}(\Lambda^{-1})}, \quad \lambda \in K, \quad v \in \s(\R^n),
    \end{equation}
    or equivalently,
\begin{equation}\label{eq:resolvent_bound}
         \|\Lambda v\|_{L^2(\R^n)} \leq C\|\Lambda^{-1}(q^w(x,D)-\lambda)v\|_{L^2(\R^n)}, \quad \lambda \in K, \quad v \in \s(\R^n).
    \end{equation}
    Letting $u=Tv,$ where $T:L^2(\R^n) \to H_{\Phi,h=1}(\C^n)$ is a unitary FBI transform with $h=1,$ we may use the exact Egorov theorem \cite{sjostrand_resonance_lectures} to rewrite (\ref{eq:resolvent_bound}) as
    \begin{equation}\label{eq:fbi_side_est}
        \|\widetilde{\Lambda} u\|_{H_{\Phi,h=1}(\C^n)} \leq C\|\widetilde{\Lambda}^{-1}(\widetilde{q}^w(x,D)-\lambda)u\|_{H_{\Phi,h=1}(\C^n)}, \quad \lambda \in K, \quad u \in T(\s(\R^n)).
    \end{equation}
    Here, $\widetilde{\Lambda}=T\Lambda T^{-1}$ and therefore 
    \begin{equation}\label{eq:lambda_tilde_inv}
        \widetilde{\Lambda}^{-1} = T\Lambda^{-1} T^{-1} = c^w(x,D), \quad c \in S(\Lambda_\Phi, \la x\ra^{-1}),
    \end{equation}
    where we also use (\ref{eq:lambda_pseudo}) in the second equality. It follows from \cite[Proposition 12.6]{sjostrand_resonance_lectures} and (\ref{eq:lambda_tilde_inv}) that 
    \begin{equation}\label{eq:lambda_tilde_inv_bounded}
\widetilde{\Lambda}^{-1}:H_{\Phi,h=1}(\la x\ra^{-1}) \to H_{\Phi,h=1}(\C^n)
    \end{equation}
    is bounded, where
    \begin{equation}
        H_{\Phi,h=1}(\la x\ra^{-1})=H(\C^n) \cap L^2(\C^n,\la x\ra^{-2} e^{-2\Phi}d\mu).
    \end{equation}
   Then, by combining (\ref{eq:fbi_side_est}), (\ref{eq:lambda_tilde_inv_bounded}),
    and the self-adjointness of $\widetilde{\Lambda}$ on $H_{\Phi,h=1}(\C^n)$, we get 
    \begin{equation}\label{eq:order_function_est}
        \|\widetilde{\Lambda}u\|^2_{H_{\Phi,h=1}(\C^n)} = \la \widetilde{\Lambda}^2u,u\ra_{H_{\Phi,h=1}(\C^n)}  \leq C\int_{\C^n} \la x\ra^{-2}|(\widetilde{q}^w(x,D)-\lambda)u(x)|^2e^{-2\Phi(x)}d\mu(x)
    \end{equation}
    for all $\lambda \in K$ and all $u \in T(\s(\R^n)).$

    It only remains to restore the semiclassical parameter $h.$ To this end, we introduce the unitary rescaling map $U: H_{\Phi,h=1}(\C^n) \to H_{\Phi,h}(\C^n)=H_\Phi(\C^n),$ given by \begin{equation}\label{eq:rescaling}
      (Uu)(x) = \frac{1}{h^{\frac{n}{2}}} u\li(\frac{x}{h^\frac12}\ri), \quad 0<h \leq 1.
  \end{equation}
  We have that 
      \begin{equation}
         U \widetilde{q}^w(x,D) U^{-1}=\frac{1}{h} \widetilde{q}^w(x,hD),
    \end{equation}
   so making the change of variables $x=\frac{y}{h^{\frac12}},$ we see that the integral on the right-hand side of (\ref{eq:order_function_est}) becomes
   \begin{equation}\label{eq:int_rewrite}
       \int_{\C^n} \frac{h}{h+|y|^2}\frac{|(\widetilde{q}^w(y,hD)-h\lambda)Uu(y)|^2}{h^2}e^{-\frac{2\Phi(y)}{h}}d\mu(y).
   \end{equation}
   Using (\ref{eq:order_function_est}),  (\ref{eq:int_rewrite}), and writing $u$ instead of $Uu,$ we obtain
    \begin{equation}\label{eq:rescaled_est}
   \la \widetilde{\Lambda}_h^2u,u\ra_{H_\Phi(\C^n)} \leq \frac{C}{h} \int_{\C^n} \frac{1}{h+|x|^2}|(\widetilde{q}^w(x,hD)-h\lambda)u(x)|^2e^{-\frac{2\Phi(x)}{h}}d\mu(x)
    \end{equation}
    for all $\lambda \in K$ and $u \in H(\C^n)$ such that $u=O_{k,h}(1)\la x\ra^{-k} e^{\frac{\Phi(x)}{h}}$ in $L^2(\C^n)$ for all $k \geq 0.$ Here, $\widetilde{\Lambda}_h=U\widetilde{\Lambda}U^{-1},$ and we rely on the characterization of the space $T(\s(\R^n))$ given in \cite[Chapter 12]{sjostrand_resonance_lectures}. 
    We have that 
    \begin{equation}\label{eq:lambda_tilde_h_defn}
        \widetilde{\Lambda}_h^2 = U\widetilde{\Lambda}^2 U^{-1}= 1+ \frac{1}{h} q_0^w(x,hD),
    \end{equation}
    where $q_0$ is a holomorphic quadratic form on $\C^{2n}$ such that $q_0|_{\Lambda_\Phi}$ is real positive definite and $q_0^w(x,hD) \geq 0$ in the sense of self-adjoint operators on $H_\Phi(\C^n).$
    
    Using (\ref{eq:rescaled_est}) and (\ref{eq:lambda_tilde_h_defn}), we get that 
    \begin{equation}\label{eq:id_est}
    h\|u\|_{H_\Phi(\C^n)}^2 +\la q_{0}^w(x,hD) u,u\ra_{H_\Phi(\C^n)} \leq C \int_{\C^n} \frac{1}{h+|x|^2}|(\widetilde{q}^w(x,hD)-h\lambda)u(x)|^2e^{-\frac{2\Phi(x)}{h}}d\mu(x).
    \end{equation}
    Here, an application of the the quantization-multiplication formula for quadratic operators \cite{HitrikSjostrandViola2013} gives
    \begin{equation}\label{eq:apply_qm}
        \la q_0^w(x,hD)u,u\ra_{H_\Phi(\C^n)} = \int_{\C^n} q_0\li(x,\frac2i \partial_x\Phi(x)\ri) |u(x)|^2 e^{-\frac{2\Phi(x)}{h}} d\mu(x)+O(h)\|u\|^2_{H_\Phi(\C^n)},
        \end{equation}
        where 
        \begin{equation}\label{eq:q0_est}
            q_0\li(x,\frac2i \partial_x\Phi(x)\ri) \sim |x|^2, \quad x \in \C^n.
        \end{equation}
    Thus, by combining (\ref{eq:id_est}),  (\ref{eq:apply_qm}), and (\ref{eq:q0_est}), we obtain 
    \begin{equation}
       h\|u\|^2_{H_\Phi(\C^n)} + \int_{\C^n} |x|^2|u(x)|^2 e^{-\frac{2\Phi(x)}{h}}d\mu(x) \leq C \int_{\C^n} \frac{1}{h+|x|^2} |(\widetilde{q}^w(x,hD)-h\lambda)u(x)|^2e^{-\frac{2\Phi(x)}{h}}d\mu(x)
    \end{equation}
    for all $\lambda \in K$ and $u \in H(\C^n)$ such that $u=O_{k,h}(1)\la x\ra^{-k} e^{\frac{\Phi(x)}{h}}$ in $L^2(\C^n)$ for all $k \geq 0.$ This completes the proof.
\end{proof}

The following result, providing a localization of the a priori estimate (\ref{eq:res_2}), has been established in \cite[Proposition 5.2]{herau_sjostrand_stolk_kfp}: 
\begin{lemma}\label{lemma_quad_bound_cutoff}
    Let $\chi_0 \in C^\infty_c(\C^n)$ be such that $\chi_0 = 1$ in a neighborhood of the origin, and let $K \subseteq \C$ be a compact set disjoint from the spectrum of $\widetilde{q}^w(x,D)$ on $H_{\Phi,h=1}(\C^n).$ Then, for all $\lambda \in K$ and $u \in H(\C^n),$ we have
    \begin{equation}\label{eq:lemma_quad_bound_cutoff}
        \|(h+|x|^2)^{\frac12}\chi_0u\|_{L^2_\Phi(\C^n)} \leq O(1) \|(h+|x|^2)^{-\frac12}\chi_0(\widetilde{q}^w(x,hD)-h\lambda)u\|_{L^2_\Phi(\C^n)}+O(h^{\frac12})\|1_Ru\|_{L^2_\Phi(\C^n)}.
    \end{equation}
    Here, $R \subseteq \C^n$ is a fixed neighborhood of $\text{supp}(\nabla \chi_0)$  and $1_R$ denotes the characteristic function of $R.$
\end{lemma}

As the local estimate (\ref{eq:lemma_quad_bound_cutoff}) ultimately needs to be valid in a $O(\epsilon^{\frac12})$--neighborhood of the origin, we apply another unitary rescaling map. For $0<h\leq\epsilon \leq 1,$ we introduce a new semiclassical parameter
\begin{equation}
    \widetilde{h}:=\frac{h}{\epsilon }\leq 1
\end{equation}
and note that the map
$V: H_\Phi(\C^n)=H_{\Phi,h}(\C^n) \to H_{\Phi,\widetilde{h}}(\C^n)$ given by
\begin{equation}
    (Vu)(\widetilde{x}) = \epsilon^{\frac{n}{2}}u\li(\epsilon^{\frac12}\widetilde{x}\ri)
\end{equation}
is unitary. We have 
\begin{equation}\label{eq:V_unitary}
    V\widetilde{q}^w(x,hD)V^{-1} = \epsilon \widetilde{q}^w(\widetilde{x},\widetilde{h}D),
\end{equation}
and applying (\ref{eq:lemma_quad_bound_cutoff}) with $\widetilde{h}$ in place of $h,$ we get
\begin{equation}\label{eq:quad_bound_rescaled_h}
\begin{split}
    \|(\widetilde{h}+|\widetilde{x}|^2)^{\frac12} \chi_0u\|_{L^2_{\Phi,\widetilde{h}}(\C^n)} & \leq O(1) \|(\widetilde{h}+|\widetilde{x}|^2)^{-\frac12}\chi_0(\widetilde{q}^w(\widetilde{x},\widetilde{h}D) -\widetilde{h}\lambda)u\|_{L^2_{\Phi,\widetilde{h}}(\C^n)}\\ &+O(\widetilde{h}^{\frac12})\|1_Ru\|_{L^2_{\Phi,\widetilde{h}}(\C^n)}.
    \end{split}
\end{equation}
Replacing $u$ by $Vu$ in (\ref{eq:quad_bound_rescaled_h}) and making the change of variables $\widetilde{x} = \frac{x}{\epsilon^{\frac12}},$ we obtain using (\ref{eq:V_unitary}) that
\begin{equation}\label{eq:localized_interior_estimate}
\begin{split}
    \|(h+|x|^2)^{\frac12} \chi_0\li(\frac{x}{\sqrt{\epsilon}}\ri)u\|_{L^2_{\Phi}(\C^n)} & \leq O(1) \|(h+|x|^2)^{-\frac12}\chi_0\li(\frac{x}{\sqrt{\epsilon}}\ri)(\widetilde{q}^w(x,hD)-h\lambda)u\|_{L^2_{\Phi}(\C^n)}\\ & +O( h^{\frac12})\|1_R\li(\frac{x}{\sqrt{\epsilon}}\ri)u\|_{L^2_{\Phi}(\C^n)},
\end{split}
\end{equation}
which holds for all $\lambda \in K$ and $u \in H(\C^n).$

In our applications, the estimate (\ref{eq:localized_interior_estimate}) will be applied to the holomorphic quadratic form $\widetilde{q}=q \circ \kappa_T^{-1},$ where $q$ is defined in (\ref{ass_quad}) and $\kappa_T$ is given in (\ref{eq:fbi_standard}). Furthermore, we shall take $\Phi = \Phi_{\delta,0}$ in (\ref{eq:phi_quad_defn}) for $\delta>0$ sufficiently small but fixed. We only need to check that (\ref{eq:ass_quad}) holds, or equivalently, that $\re q|_{\Lambda_{\delta,0}}>0$ in the sense of quadratic forms. To this end, recall that the quadratic form $G_0,$ defined in (\ref{eq:G_zero_defn}), satisfies
\begin{equation}\label{eq:avg_quad}
    H_{\im q}G_0 = \la \re q \ra_{\im q, T}-\re q.
\end{equation}
An application of Taylor's formula and (\ref{eq:avg_quad}) then yield
    \begin{equation}
    \begin{split}
        \re q(X+i\delta H_{G_0}(X)) 
        & = \re q(X)+\delta H_{\im q}G_0(X) + O(\delta^2|X|^2) \\
        & = (1-\delta)\re q(X)+\delta \la \re q\ra_{\im q,T}(X)+O(\delta^2|X|^2) \geq \frac{\delta}{C} |X|^2
    \end{split}
    \end{equation}
    for $X \in \R^{2n}$ and $\delta>0$ sufficiently small, in view of (\ref{ass_dyn_1}). The spectrum of the quadratic operator $\widetilde{q}^w(x,hD)$ on $H_{\Phi_{\delta,0}}(\C^n),$ for $0<\delta \ll1,$ is equal to the spectrum of $\widetilde{q}^w(x,hD)$ on $H_{\Phi_0}(\C^n),$ with the agreement of algebraic multiplies (see \cite{HitrikPravdaStarov2009}), and the above discussion may therefore be summarized in the following result.
    \begin{prop}\label{prop_int}
        Let $q$ be a quadratic form on $\R^{2n}$ such that $\re q \geq 0$ and (\ref{ass_dyn_1}) holds. Set $\widetilde{q} = q \circ \kappa_T^{-1}$ and define $\Phi_{\delta,0}$ as in (\ref{eq:phi_quad_defn}) for $\delta>0$ sufficiently small but fixed. Let $K \subseteq \C$ be a compact set disjoint from the spectrum of $q^w(x,D)$ on $L^2(\R^n).$ Then, for $0<h \leq \epsilon \leq 1,$ for all $\lambda \in K$ and all $u \in H(\C^n),$ one has
        \begin{equation}\label{eq:prop_localized_interior_estimate_final}
\begin{split}
    \|(h+|x|^2)^{\frac12} \chi_0\li(\frac{x}{\sqrt{\epsilon}}\ri)u\|_{L^2_{\Phi_{\delta,0}}} & \leq O(1) \|(h+|x|^2)^{-\frac12}\chi_0\li(\frac{x}{\sqrt{\epsilon}}\ri)(\widetilde{q}^w(x,hD)-h\lambda)u\|_{L^2_{\Phi_{\delta,0}}}\\ & +O( h^{\frac12})\|1_R\li(\frac{x}{\sqrt{\epsilon}}\ri)u\|_{L^2_{\Phi_{\delta,0}}}.
\end{split}
\end{equation}
Here, $\chi_0 \in C_c^\infty(\C^n;[0,1])$ is such that $\chi_0=1$ in a neighborhood of $0$ and $R \subseteq \C^n$ is a fixed neighborhood of $\text{supp}(\nabla \chi_0).$
\end{prop}

\section{Full resolvent estimates and proof of Theorem \ref{thm1}}\label{glue_together}
The purpose of this section is to complete the proof of Theorem \ref{thm1} by combining the results of Proposition \ref{prop_ext} and Proposition \ref{prop_int}. Fix $C>0$ and let $\lambda \in \C$ be the spectral parameter such that $\lambda \in D(0,C)$ and $\lambda \not \in \Omega,$ where $\Omega \subseteq \C$ is an open neighborhood of $\text{Spec}_{L^2(\R^n)}(q^w(x,D)).$ Our starting point is the estimate (\ref{eq:prop_localized_interior_estimate_final}), which we apply with $\epsilon=Ah$ for $A \geq 1$ and $K = \Omega^c \cap \overline{D(0,C)}.$ We first notice that for each fixed $A \geq 1,$ the $L^2_{\Phi_{\delta,0}}(\C^n)$-norms in (\ref{eq:prop_localized_interior_estimate_final}) can be replaced by norms in the space $L^2_{\Phi_{\delta,\epsilon}}(\C^n),$ provided that $0<h \leq h_0,$ where $h_0=h_0(A)>0$ is small enough depending on $A.$ Indeed, as already observed in \cite[Section 3]{hitrik_starov_i}, this follows from the fact that 
\begin{equation}
    \Phi_{\delta,\epsilon}(x) = \Phi_{\delta,0}(x)+O(\delta|x|^3), \quad |x| \leq \frac{\epsilon^{\frac12}}{C},
\end{equation}
(see (\ref{eq:G_eps_G_zero_Taylor}), (\ref{eq:phi_g_taylor}), (\ref{eq:phi_eps_zero_sim})).

In the region $|x| \leq \frac{\epsilon^{\frac12}}{C},$ we therefore have
\begin{equation}
    e^{-O(1)A^{\frac32}h^{\frac12}}\leq e^{-\frac{\Phi_{\delta,\epsilon}}{h}} e^{\frac{\Phi_{\delta,0}}{h}} \leq e^{O(1)A^{\frac32}h^{\frac12}}. 
\end{equation}
It follows that when $0<h \leq h_0(A),$ one may replace the weight $\Phi_{\delta,0}$ in (\ref{eq:prop_localized_interior_estimate_final}) by the weight $\Phi_{\delta,\epsilon}$ at the cost of a constant multiplicative error. Therefore, in view of (\ref{eq:prop_localized_interior_estimate_final}) and the discussion above,
\begin{equation}\label{eq:est_int}
\begin{split}
     \|(h+|x|^2)^{\frac12} \chi_0\li(\frac{x}{\sqrt{Ah}}\ri)u\|_{L^2_{\Phi_{\delta,\epsilon}}} \leq O(1)& \|(h+|x|^2)^{-\frac12}\chi_0\li(\frac{x}{\sqrt{Ah}}\ri)(\widetilde{q}^w(x,hD)-h\lambda)u\|_{L^2_{\Phi_{\delta,\epsilon}}}\\&+O( h^{\frac12})\|1_R\li(\frac{x}{\sqrt{Ah}}\ri)u\|_{L^2_{\Phi_{\delta,\epsilon}}},
\end{split}
\end{equation}
where $u \in H(\C^n),$ the cutoff function $\chi_0 \in C_c^\infty(\C^n;[0,1])$ is such that $\chi_0=1$ near $0,$ $R \subseteq \C^n$ is a neighborhood of $\text{supp}(\nabla \chi_0),$ and $0<h \leq h_0(A).$

Let $\widetilde{p}=\widetilde{p}(x,\xi;h)$ be as in (\ref{eq:ext_symbol}). We would like to replace the quadratic differential operator $\widetilde{q}^w(x,hD)$ in (\ref{eq:est_int}) by the operator $\widetilde{p}^w(x,hD;h)-p_0(0)-hp_1(0),$ committing only a small error. Here, we recall that $p_0(0) \in i \R$ and $p_1(0) \in \C$ is the value of the subprincipal symbol at the critical point $0 \in \R^{2n}.$ To this end, we observe that the almost holomorphic extension $\widetilde{p}_0$ in (\ref{eq:fbi_p_asymptotic}) satisfies 
\begin{equation}
    \widetilde{p}_0(x,\xi) = p_0(0)+\widetilde{q}(x,\xi)+O((x,\xi)^3) , \quad \C^{2n} \ni(x,\xi) \to (0,0),
\end{equation}
in view of (\ref{ass_quad}).
It follows that 
\begin{equation}\label{eq:p_tilde_taylor_full}
    \widetilde{p}(x,\xi;h)=p_0(0)+hp_1(0)+\widetilde{q}(x,\xi) +O(1)((x,\xi)^3+h(x,\xi)+h^2), \quad \C^{2n} \ni (x,\xi) \to (0,0).
\end{equation}
Combining (\ref{eq:p_tilde_taylor_full}) with the fact that $\widetilde{p} \in C_b^\infty(\C^{2n}),$ we obtain that 
\begin{equation}\label{eq:p_tilde_taylor_comp}
    |\widetilde{p}(x,\xi;h)-p_0(0)-hp_1(0)-\widetilde{q}(x,\xi)| \leq O(1)(|x|^3+|\xi|^3+h|x|+h|\xi|+h^2), \quad (x,\xi) \in \C^{2n}.
\end{equation}
Let $u \in T(\s(\R^n)),$ i.e. $u \in H(\C^n)$ and $u(x) = O_{k,h}(1) \la x \ra^{-k} e^{\frac{\Phi_0(x)}{h}}$ for all $k \in \N.$ When comparing the operators $\widetilde{p}^w(x,hD;h)-p_0(0)-hp_1(0)$ and $\widetilde{q}^w(x,hD)$ acting on $u,$ we may realize them both with the help of the contour given in (\ref{eq:deformed_contour_defn}), where $c>0$ is sufficiently large. We get
\begin{equation}\label{eq:p_q_kernel}
\begin{split}
        (\widetilde{p}^w(x,hD;h)& -p_0(0)-hp_1(0)-\widetilde{q}^w(x,hD))u(x)\\ &  = \frac{1}{(2\pi h)^n} \int_{\widetilde{\Gamma}_1(x)} e^{\frac{i}{h}(x-y) \cdot \theta} (\widetilde{p}-p_0(0)-hp_1(0)-\widetilde{q})\li(\frac{x+y}{2},\theta\ri)u(y)dy \wedge d\theta,
\end{split}
\end{equation}
where $\widetilde{\Gamma}_1(x)$ is given by 
\begin{equation}\label{eq:phi_contour_full}
    \theta = \frac2i \partial_x \Phi_{\delta,\epsilon}\li(\frac{x+y}{2}\ri)+ic(\overline{x-y}), \quad y \in \C^n.
\end{equation}
Writing 
\begin{equation}
     (\widetilde{p}^w(x,hD;h)-p_0(0)-hp_1(0)-\widetilde{q}^w(x,hD))u(x)=\int k_{\widetilde{\Gamma}_1}(x,y;h)u(y)d\mu(y),
\end{equation}
we obtain in view of (\ref{eq:p_tilde_taylor_comp}), (\ref{eq:p_q_kernel}), (\ref{eq:phi_contour_full}) that the absolute value of the effective kernel
\begin{equation}
    e^{-\frac{\Phi_{\delta,\epsilon}(x)}{h}} k_{\widetilde{\Gamma}_1}(x,y;h)e^{\frac{\Phi_{\delta,\epsilon}(y)}{h}}
\end{equation}
does not exceed, for some $F>0$ independent of $A,$
\begin{equation}\label{eq:kernel_estimate}
\begin{split}
    \frac{O(1)}{h^n} & e^{-\frac{F}{h}|x-y|^2}(|x|^3+|y|^3+|x-y|^3+\epsilon^{\frac32}+h|x|+h|y|+h|x-y|+h\epsilon^{\frac12}) \\ &
    \leq \frac{O_A(1)}{h^n}  e^{-\frac{F}{h}|x-y|^2}(|x|^3+|x-y|^3+h^{\frac32}+h|x|+h|x-y|).
\end{split}
\end{equation}
Here we have also used the first equality in (\ref{eq:phi_g_der_bounds}). Then, using (\ref{eq:kernel_estimate}) and Schur's lemma, we get
\begin{equation}\label{eq:kernel_quant_estimate}
    \|\chi_0\li(\frac{x}{\sqrt{Ah}}\ri)(\widetilde{p}^w(x,hD;h)-p_0(0)-hp_1(0)-\widetilde{q}^w(x,hD))u\|_{L^2_{\Phi_{\delta,\epsilon}}} =O_A(h^{\frac32})\|u\|_{L^2_{\Phi_{\delta,\epsilon}}},
\end{equation}
and combining (\ref{eq:est_int}) and (\ref{eq:kernel_quant_estimate}) leads to
\begin{equation}\label{eq:est_int_2}
\begin{split}
     \|(h+|x|^2)^{\frac12} &\chi_0\li(\frac{x}{\sqrt{Ah}}\ri)u\|_{L^2_{\Phi_{\delta,\epsilon}}} \\ & \leq O(1) \|(h+|x|^2)^{-\frac12}\chi_0\li(\frac{x}{\sqrt{Ah}}\ri)(\widetilde{p}^w(x,hD;h)-p_0(0)-hp_1(0)-h\lambda)u\|_{L^2_{\Phi_{\delta,\epsilon}}}\\&+O_A(h)\|u\|_{L^2_{\Phi_{\delta,\epsilon}}}+O( h^{\frac12})\|1_R\li(\frac{x}{\sqrt{Ah}}\ri)u\|_{L^2_{\Phi_{\delta,\epsilon}}}.
\end{split}
\end{equation}
Squaring (\ref{eq:est_int_2}), we obtain 
\begin{equation}\label{eq:est_int_3}
\begin{split}
     h\|\chi_0\li(\frac{x}{\sqrt{Ah}}\ri)u\|_{L^2_{\Phi_{\delta,\epsilon}}}^2 & \leq \frac{O(1)}{h} \|(\widetilde{p}^w(x,hD;h)-p_0(0)-hp_1(0)-h\lambda)u\|_{L^2_{\Phi_{\delta,\epsilon}}}^2\\&+O_A(h^2)\|u\|_{L^2_{\Phi_{\delta,\epsilon}}}^2+O( h)\|1_R\li(\frac{x}{\sqrt{Ah}}\ri)u\|_{L^2_{\Phi_{\delta,\epsilon}}}^2.
\end{split}
\end{equation}
Here, $u \in T(\s(\R^n)), 0<h \leq h_0(A),$ and $\lambda \in K.$

When estimating the last term on the right hand side of (\ref{eq:est_int_3}), we may use (\ref{eq:exterior_final_2}) in Proposition \ref{prop_ext}, which for $\re z \leq B$ gives 

\begin{equation}\label{eq:interior_full}
\begin{split}
    h\int_{\C^n}& 1_R\li(\frac{x}{\sqrt\epsilon}\ri)  |u(x)|^2e^{-\frac{2\Phi_{\delta,\epsilon}(x)}{h}}d\mu(x) \\ & \leq O(1) \|(\widetilde{p}^w(x,hD;h)-p_0(0)-hz)u\|_{{\Phi_{\delta,\epsilon}}}\|u\|_{\Phi_{\delta,\epsilon}}+\li(O_B\li(\frac{h}{A}\ri)+O_A(h^\infty)\ri)\|u\|_{\Phi_{\delta,\epsilon}}^2.
\end{split} 
\end{equation}
In our case, $z=p_1(0)+\lambda,$ where $\lambda \in D(0,C),$ and therefore we get 
\begin{equation}\label{eq:est_ext}
\begin{split}
    h\int_{\C^n}& 1_R\li(\frac{x}{\sqrt\epsilon}\ri)  |u(x)|^2e^{-\frac{2\Phi_{\delta,\epsilon}(x)}{h}}d\mu(x) \\ & \leq O(1) \|(\widetilde{p}^w(x,hD;h)-p_0(0)-hp_1(0)-h\lambda)u\|_{{\Phi_{\delta,\epsilon}}}\|u\|_{\Phi_{\delta,\epsilon}}+\li(O_C\li(\frac{h}{A}\ri)+O_A(h^\infty)\ri)\|u\|_{\Phi_{\delta,\epsilon}}^2
\end{split} 
\end{equation}
for all $\lambda \in D(0,C).$
By choosing the cutoffs $\chi$ and $\chi_0$ such that $\chi^2+\chi_0^2=1$ on $\C^n,$ we may combine (\ref{eq:exterior_final_2}), (\ref{eq:est_int_3}), and (\ref{eq:est_ext}) to obtain that 

\begin{equation}\label{eq:est_ext_2}
\begin{split}
    h \int_{\C^n} |u(x)|^2 &e^{-\frac{2\Phi_{\delta,\epsilon}(x)}{h}} d\mu(x) \leq O(1)\|(\widetilde{p}^w(x,hD;h)-p_0(0)-hp_1(0)-h\lambda)u\|_{\Phi_{\delta,\epsilon}}\|u\|_{\Phi_{\delta,\epsilon}} \\&+\frac{O(1)}{h}\|(\widetilde{p}^w(x,hD;h)-p_0(0)-hp_1(0)-h\lambda)u\|_{\Phi_{\delta,\epsilon}}^2 
     +\li(O_C\li(\frac{h}{A}\ri)+O_A(h^\infty)\ri)\|u\|_{\Phi_{\delta,\epsilon}}^2
\end{split}
\end{equation}
for all $u \in T(\s(\R^n)), \lambda \in K,$ and $0 <h \leq h_0(A).$ Using (\ref{eq:est_ext_2}) together with 
\begin{equation}
\begin{split}
O(1) \|(&\widetilde{p}^w(x,hD;h)-p_0(0)-hp_1(0)-h\lambda)u\|_{{\Phi_{\delta,\epsilon}}}\|u\|_{\Phi_{\delta,\epsilon}} \\ & \leq \frac{O(1)}{h} \|(\widetilde{p}^w(x,hD;h)-p_0(0)-hp_1(0)-h\lambda)u\|_{{\Phi_{\delta,\epsilon}}}^2+\frac{h}{2}\|u\|_{\Phi_{\delta,\epsilon}}^2,
    \end{split}
\end{equation}
we get 
\begin{equation}\label{eq:est_ext_3}
\begin{split}
h \int_{\C^n} |u(x)|^2 e^{-\frac{2\Phi_{\delta,\epsilon}(x)}{h}} d\mu(x) & \leq \frac{O(1)}{h} \|(\widetilde{p}^w(x,hD;h)-p_0(0)-hp_1(0)-h\lambda)u\|_{{\Phi_{\delta,\epsilon}}}^2\\ & +\li(O_C\li(\frac{h}{A}\ri)+ O_A(h^\infty)\ri)\|u\|_{\Phi_{\delta,\epsilon}}^2.
\end{split}
\end{equation}
For each $C>0,$ we may choose the parameter $A$ large enough and fixed so that 
\begin{equation}
    O_C\li(\frac{1}{A}\ri) \leq \frac{1}{2}.
\end{equation}
We then infer from (\ref{eq:est_ext_3}) that for all $u \in T(\s(\R^n))$ and all $h>0$ small enough, depending on the choice of $A,$ one has
\begin{equation}\label{eq:phi_g_simplified}
    h\|u\|_{\Phi_{\delta,\epsilon}} \leq O(1)\|(\widetilde{p}^w(x,hD;h)-p_0(0)-hp_1(0)-h\lambda)u\|_{\Phi_{\delta,\epsilon}}.
\end{equation}
It follows from (\ref{eq:phi_eps_zero_sim}) and (\ref{eq:phi_g_simplified}) that 
\begin{equation}\label{eq:phi_g_simplified_2}
    \|u\|_{\Phi_0} \leq O\li(\frac{1}{h}\ri)\|(\widetilde{p}^w(x,hD;h)-p_0(0)-hp_1(0)-h\lambda)u\|_{\Phi_0}.
\end{equation}
Using (\ref{eq:phi_g_simplified_2}), the unitarity of the FBI transform $T:L^2(\R^n) \to H_{\Phi_0}(\C^n),$ and (\ref{eq:egorov}), we get 
\begin{equation}\label{eq:resolvent_estimate_final}
   h \|u\|_{L^2(\R^n)} \leq O(1)\|(\widetilde{p}^w(x,hD;h)-p_0(0)-hp_1(0)-h\lambda)u\|_{L^2(\R^n)}
\end{equation}
for all $u \in \s(\R^n)$ and all $\lambda \in K.$ A density argument allows us to extend (\ref{eq:resolvent_estimate_final}) to all of $L^2(\R^n),$ implying that for all such $\lambda,$ the operator
\begin{equation}
    \widetilde{p}^w(x,hD;h)-p_0(0)-hp_1(0)-h\lambda: L^2(\R^n)\to L^2(\R^n)
\end{equation}
is injective with closed range. Recalling (\ref{ass_elliptic}), we observe that it is also Fredholm of index zero. It follows that 
\begin{equation}
     \widetilde{p}^w(x,hD;h)-p_0(0)-hp_1(0)-h\lambda: L^2(\R^n)\to L^2(\R^n), \quad \lambda \in K= \Omega^c \cap \overline{D(0,C)},
\end{equation}
is invertible for all $h>0$ small enough, with the inverse satisfying 
\begin{equation}
    \|(\widetilde{p}^w(x,hD;h)-p_0(0)-hp_1(0)-h\lambda)^{-1}\|_{\mathcal{L}(L^2(\R^n),L^2(\R^n))} \leq O_{C,\Omega}\li(\frac{1}{h}\ri).
\end{equation}
This completes the proof of Theorem \ref{thm1}. 
\appendix
\section{Sobolev spaces and Bargmann transforms}\label{appendix_a}
Let $m>0$ be an order function on $\R^{2n}$, in the sense that $m$ is Lebesgue measurable and there exist constants $C_0>0, N_0>0$ such that 
\begin{equation}
    m(X) \leq C_0 \la X-Y\ra^{N_0}m(Y), \quad X,Y \in \R^{2n}.
\end{equation}
Here, $\la X\ra=(1+|X|^2)^{\frac12}$ is the Japanese bracket of $X \in \R^{2n}.$ Associated to $m$ is the symbol class $S(\R^{2n},m)$
of all functions $a \in C^\infty(\R^{2n})$ such that 
\begin{equation}
    \frac{\partial^\alpha a}{m} \in L^\infty(\R^{2n}), \quad \forall\alpha \in 
    \N^{2n}.
\end{equation}
Regularizing $m$ without changing its order of magnitude, we may thus assume that $m \in S(\R^{2n},m)$ (see \cite[Lemma 8.7]{Zworski2012}). We can then introduce the Sobolev space\begin{equation}\label{weighted_sobolev_defn}
       H(m):=(m^w(x,hD))^{-1}(L^2(\R^n)) \subseteq \s'(\R^n),
\end{equation}
where $m^w(x,hD)$ is the $h$--Weyl quantization of $m$ and $(m^w(x,hD))^{-1}$ is its pseudodifferential inverse, defined for $h>0$ small enough \cite[Chapter 8]{Dimassi_Sjostrand_1999}. When equipped with the norm 
\begin{equation}
    \|u\|_{H(m)}=\|m^w(x,hD)u\|_{L^2(\R^n)},
\end{equation}
the space $H(m)$ becomes a Banach space and contains the Schwartz space $\s(\R^n)$ as a dense subspace.

Given a holomorphic quadratic form $\phi$ on $\C^{2n}$ with 
\begin{equation}
    \im \partial^2_{yy}\phi >0 , \quad \det \partial^2_{xy}\phi \neq 0,
\end{equation}
associated to $\phi$ is the complex linear canonical transformation
\begin{equation}
    \kappa: \C^{2n} \ni (y,-\partial_y \phi(x,y)) \to (x,\partial_x\phi(x,y)) \in \C^{2n},
\end{equation}
as well as the semiclassical metaplectic FBI-Bargmann transform 
\begin{equation}
    T u(x) = Ch^{-\frac{3n}{4}}\int_{\R^n} e^{\frac{i}{h}\phi(x,y)} u(y)dy,
\end{equation}
which defines a unitary map 
\begin{equation}\label{eq:egorov_unitary}
    T: L^2(\R^n) \to H_\Phi(\C^n)
\end{equation}
for a suitable choice of constant $C>0,$ see \cite[Section 1]{sjostrand_96}, \cite[Section 12.2]{sjostrand_resonance_lectures}, \cite[Section 1.3]{almost_holomorphic_ref}. Here,
\begin{equation}
    H_\Phi(\C^n) = H(\C^n) \cap L^2(\C^n,e^{-\frac{2\Phi}{h}}d\mu),
\end{equation}
where $\mu$ is the Lebesgue measure on $\C^n$ and $\Phi$ is a strictly plurisubharmonic quadratic form on $\C^n$ given by 
\begin{equation}
    \Phi(x) = \sup_{y \in \R^n} (-\im\phi(x,y)).
\end{equation}
We have furthermore, in view of \cite[Proposition 1.3.2]{almost_holomorphic_ref},
\begin{equation}
    \kappa(\R^{2n}) = \Lambda_\Phi:=\li\{\li(x,\frac2i\partial_x \Phi(x)\ri):x \in \C^n\ri\}.
\end{equation}
Let us also set
\begin{equation}\label{eq:order_fun_fbi}
    \widetilde{m}(x) := m\li(\kappa^{-1}\li(x,\frac{2}{i} \partial_x \Phi(x)\ri)\ri), \quad x \in \C^n, 
\end{equation}
and, in analogy with (\ref{weighted_sobolev_defn}), we define
 \begin{equation}
        H_{\Phi}(\widetilde{m}):= H(\C^n) \cap L^2(\C^n, \widetilde{m}^2 e^{-\frac{2\Phi}{h}}d\mu).
    \end{equation}

The purpose of this appendix is to demonstrate the following general, essentially well-known result:
   
\begin{prop}
    We have that for all $h>0$ small enough,
    \begin{equation}\label{eq:sobolev_surjective}
        T=O(1): H(m) \to H_{\Phi}(\widetilde{m})
    \end{equation}
    is an isomorphism with uniformly bounded inverse
    \begin{equation}\label{eq:sobolev_injective}
        T^{-1}=O(1): H_{\Phi}(\widetilde{m}) \to H(m).
    \end{equation}
\end{prop}
\begin{proof}
Let $a \in S(\R^{2n},m)$ and define $\widetilde{a} = a \circ \kappa^{-1} \in S(\Lambda_\Phi, m \circ \kappa^{-1}).$ Here, we may identify the order function $m \circ \kappa^{-1}$ on $\Lambda_\Phi$ with the order function $\widetilde{m}$ given by (\ref{eq:order_fun_fbi}). We recall the metaplectic invariance property
\begin{equation}\label{eq:egorov_2}
    Ta^w(x,hD)u = \widetilde{a}^w(x,hD)Tu, \quad u \in \s'(\R^n),
\end{equation}
see \cite[Proposition 1.4]{sjostrand_96}, where $\widetilde{a}^w: T(\s'(\R^n)) \to T(\s'(\R^n))$ is the $h$--Weyl quantization of $\widetilde{a}$ in the complex domain (see \cite[Section 1]{sjostrand_96} and also \cite{Hitrik_Johnson_2025}).

Now, let $e \in S(\R^{2n},\frac{1}{m})$ be such that for all $h>0$ small enough, one has
\begin{equation}
e^w(x,hD)m^w(x,hD)u=m^w(x,hD)e^w(x,hD)u=u, \quad u \in \s'(\R^n),
\end{equation}
see \cite[Chapter 8]{Dimassi_Sjostrand_1999}, and for $u \in H(m),$ using (\ref{eq:egorov_2}), let us write
\begin{equation}\label{eq:egorov_m_map}
    Tu=\widetilde{e}^w (T(m^wu)).
\end{equation}
Here, $T(m^w u) \in H_\Phi(\C^n)$ due to (\ref{weighted_sobolev_defn}) and (\ref{eq:egorov_unitary}), and $\widetilde{e} = e \circ \kappa^{-1} \in S(\Lambda_\Phi, \widetilde{m}^{-1}).$ An application of \cite[Proposition 12.6]{sjostrand_resonance_lectures} gives that 
\begin{equation}\label{eq:e_map}
    \widetilde{e}^w = O(1): H_\Phi(\C^n) \to H_{\Phi}(\widetilde{m}).
\end{equation}
Then, (\ref{eq:egorov_m_map}), (\ref{eq:e_map}) yield that $Tu \in H_\Phi(\widetilde{m})$ with
\begin{equation}
    \|Tu\|_{H_\Phi(\widetilde{m})} \leq O(1) \|T(m^wu)\|_{H_\Phi(\C^n)} \leq O(1) \|m^w u\|_{L^2(\R^n)} = O(1)\|u\|_{H(m)}.
\end{equation}
We therefore obtain (\ref{eq:sobolev_surjective}).
As it is generally known that $T: \s'(\R^n) \to H(\C^n)$ is injective, it remains to show that $T: H(m) \to H_{\Phi}(\widetilde{m})$ is surjective and that (\ref{eq:sobolev_injective}) holds. 
To this end, let $U \in H_{\Phi}(\widetilde{m}),$ and let us write $U=Tu$ for some unique $u \in \s'(\R^n)$ (see \cite[Proposition 6.1]{Hormander_1991_quadratic_hyperbolic}). Another application of \cite[Proposition 12.6]{sjostrand_resonance_lectures} then gives 
\begin{equation}
    \widetilde{m}^w = O(1): H_\Phi(\widetilde{m}) \to H_\Phi(\C^n),
\end{equation}
so that (\ref{eq:egorov_2}) implies 
\begin{equation}
    T(m^w u)= \widetilde{m}^w Tu \in H_\Phi(\C^n).
\end{equation}
We thus conclude that $u \in H(m)$ and 
\begin{equation}
    \|u\|_{H(m)} = \|m^w u\|_{L^2(\R^n)} = \|\widetilde{m}^wTu\|_{H_\Phi(\C^n)} \leq O(1) \|Tu\|_{H_{\Phi}(\widetilde{m})}.
\end{equation}
This completes the proof.
\end{proof}
\textit{Remark.} Using the quantization-multiplication formula \cite[Corollary 12.12]{sjostrand_resonance_lectures}, we see that (\ref{eq:sobolev_surjective}) and (\ref{eq:sobolev_injective}) can be sharpened to 
\begin{equation}
    \|u\|_{H(m)}=\|Tu\|_{H_{\Phi}(\widetilde{m})}(1+O(h)), \quad u \in H(m).
\end{equation}
\bibliographystyle{alpha}
\bibliography{refs_harm_approx_v2}
\end{document}